\newcommand{\N}{\mathbb{N}}
\newcommand{\R}{\mathbb{R}}
\newcommand{\function}[3]{#1:#2\rightarrow #3}
\newcommand{\C}[2]{
\ifthenelse{#1=0 \and #2=0}{\textsf{\upshape C}}
{\ifthenelse{#2=0}{\textsf{\upshape C}^{#1}}
{\textsf{\upshape C}^{#1,#2}}}
}
\renewcommand{\d}{\mathrm{d}}
\newcommand{\e}{\mathrm{e}}
\newcommand{\E}{\textsf{\upshape E}}
\renewcommand{\P}{\textsf{\upshape P}}
\newcommand{\indicator}[1]{\mathbf{1}_{#1}}
\renewcommand{\>}{\rangle}
\newcommand{\filtration}[1]{\mathfrak{#1}}
\newcommand{\sigalgebra}[1]{\mathscr{#1}}
\newcommand{\scale}{\ensuremath{\mathfrak{s}}}
\newcommand{\speed}{\ensuremath{\mathfrak{m}}}
\let\oldr@@t\r@@t
\def\r@@t#1#2{%
\setbox0=\hbox{$\oldr@@t#1{#2\,}$}\dimen0=\ht0
\advance\dimen0-0.2\ht0
\setbox2=\hbox{\vrule height\ht0 depth -\dimen0}%
{\box0\lower0.4pt\box2}}
\LetLtxMacro{\oldsqrt}{\sqrt}
\renewcommand*{\sqrt}[2][\ ]{\oldsqrt[#1]{#2}}
\theoremstyle{plain}
\newtheorem{theorem}{Theorem}
\newtheorem{lemma}[theorem]{Lemma}
\newtheorem{proposition}[theorem]{Proposition}
\theoremstyle{definition}
\newtheorem{example}[theorem]{Example}
\theoremstyle{remark}
\newtheorem{remark}[theorem]{Remark}
\numberwithin{theorem}{section}
\numberwithin{equation}{section}
\numberwithin{figure}{section}
\numberwithin{table}{section}
\begin{document}
\title{Optimal Prediction of the Last-Passage Time of a Transient Diffusion}

\author{Kristoffer Glover}
\author{Hardy Hulley}

\address{Kristoffer Glover\\
Finance Discipline Group\\
University of Technology, Sydney\\
P.O. Box 123\\
Broadway, NSW 2007\\
Australia}
\email{kristoffer.glover@uts.edu.au}

\address{Hardy Hulley\\
Finance Discipline Group\\
University of Technology, Sydney\\
P.O. Box 123\\
Broadway, NSW 2007\\
Australia}
\email{hardy.hulley@uts.edu.au}

\subjclass[2010]{Primary: 60G40, 60J60, 60J65; Secondary: 34A34, 49J40, 60G44}

\keywords{Transient diffusions; last-passage times; optimal prediction; optimal stopping; free-boundary problems}

\date{\today}

\begin{abstract}
We identify the integrable stopping time $\tau_*$ with minimal $L^1$-distance to the last-passage time $\gamma_z$ to a given level $z>0$, for an arbitrary non-negative time-homogeneous transient diffusion $X$. We demonstrate that $\tau_*$ is in fact the first time that $X$ assumes a value outside a half-open interval $[0,r_*)$. The upper boundary $r_*>z$ of this interval is characterised either as the solution for a one-dimensional optimisation problem, or as part of the solution for a free-boundary problem. A number of concrete examples illustrate the result.
\end{abstract}

\maketitle

\section{Introduction}
\label{sec1}
Let $\function{a}{(0,\infty)}{(0,\infty)}$ and $\function{b}{(0,\infty)}{\R_+}$ satisfy the local integrability condition
\begin{equation}
\label{eqsec1:LocInt}
\forall x>0,\;\exists\varepsilon>0\qquad\text{such that}\qquad\int_{x-\varepsilon}^{x+\varepsilon}\frac{1+|b(y)|}{a^2(y)}\,\d y<\infty.
\end{equation}
It follows that the stochastic differential equation
\begin{equation}
\label{eqsec1:SDE}
\d X_t=b(X_t)\,\d t+a(X_t)\,\d B_t,
\end{equation}
for all $t\geq 0$, admits a unique non-negative weak solution $(X,B)=(X_t,B_t)_{t\geq 0}$ up to its (possibly infinite) explosion time, with uniqueness in the sense of probability law \citep[see e.g.][Theorem~5.5.15]{KS91}. The process $B$ is a standard Brownian motion on a complete filtered probability space $(\Omega,\sigalgebra{F},\filtration{F},\P)$, whose filtration $\filtration{F}=(\sigalgebra{F}_t)_{t\geq 0}$ satisfies the ``usual conditions'' of right continuity and completion by the null-sets of $\sigalgebra{F}$. The explosion time of $X$ is given by $\zeta\coloneqq\lim_{n\uparrow\infty}\zeta_n$, where
\begin{equation*}
\zeta_n\coloneqq\inf\biggl\{t\geq 0\,\Bigl|\,X_t\notin\biggl(\frac{1}{n},n\biggr)\biggr\},
\end{equation*}
for each $n\in\N$. We assume that $X$ is killed when it explodes.

The scale function $\scale$ and the speed measure $\speed$ of $X$ are determined by
\begin{equation}
\label{eqsec1:ScaleSpeed1}
\scale(x)\coloneqq\int^x\e^{-\int^y\frac{2b(\xi)}{a^2(\xi)}\,\d\xi}\,\d y
\qquad\text{and}\qquad
\speed(\d x)\coloneqq\frac{2\,\d x}{a^2(x)\scale'(x)},
\end{equation}
for all $x>0$. We assume that $\scale$ and $\speed$ satisfy the following conditions:
\begin{equation}
\label{eqsec1:Assump}
\scale(0+)=-\infty,\qquad\scale(\infty-)=0,\qquad\text{and}\qquad
\int_0^1\speed(\d y)<\infty.
\end{equation}
The first two conditions ensure that $X$ is transient, in the sense that
\begin{equation*}
\P_x\biggl(\lim_{t\uparrow\zeta}X_t=\infty\biggr)=1
\qquad\text{and}\qquad
\P_x\biggl(\inf_{0\leq t<\zeta}X_t>0\biggr)=1,
\end{equation*}
for all $x>0$ \citep[see e.g.][Proposition~5.5.22]{KS91}. Here $\P_x$ denotes the probability measure under which $X_0=x$. The first and third conditions in Assumption~\eqref{eqsec1:Assump} imply that the origin is either a non-attractive natural boundary, or an entrance boundary \citep[see e.g.][Table~15.6.2]{KT81}.

Fix $z>0$, and consider the last-passage time
\begin{equation*}
\label{eqsec1:LPTime}
\gamma_z\coloneqq\sup\{t\geq 0\,|\,X_t=z\}.
\end{equation*}
The transience of $X$ ensures that $\gamma_z<\infty$ $\P_x$-a.s., for all $x>0$. It also implies that $\gamma_z>0$ $\P_x$-a.s., whenever $0<x<z$. Note that $\gamma_z$ is not a stopping time, since it is impossible to determine that a sample path of $X$ has reached $z$ for the last time, without knowledge of its future evolution. It is thus natural to enquire whether $\gamma_z$ can be approximated optimally, in some sense, by a stopping time. This leads to the formulation of the following optimal stopping problem, which is the subject of our investigation:
\begin{equation}
\label{eqsec1:OptPredProb1}
V(x)\coloneqq\inf_\tau\E_x(|\gamma_z-\tau|),
\end{equation}
for all $x>0$, where the infimum is computed over all integrable stopping times.

The optimal stopping problem presented in \eqref{eqsec1:OptPredProb1} belongs to the class of so-called optimal prediction problems, the defining characteristic of which is that their payoffs are unknown at stopping times. The progenitor of optimal prediction is \citet{GPS01}, which solves the problem of stopping a Brownian motion as close as possible to its ultimate maximum over a finite time-interval, using an $L^2$-criterion. That problem was later revisited by \citet{Ped03} and \citet{Uru05}, who considered alternative criteria for optimality.

Subsequent research focussed mainly on optimally predicting the maxima and minima of arithmetic and geometric Brownian motions. To begin with, \citet{DP07} investigated the problem of stopping an arithmetic Brownian motion as close as possible to its ultimate maximum, in an $L^2$-sense, with a finite time-horizon. Thereafter, \citet{DP08} identified the stopping time with minimal $L^1$-distance to the time at which an arithmetic Brownian motion achieves its ultimate maximum, over a finite time-interval. Next, \citet{SXZ08} and \citet{DP09} obtained solutions for the problem of stopping a geometric Brownian motion in order to maximise the expected ratio of its stopped value to its ultimate maximum, over a finite time-interval. They also solved the problem of stopping a geometric Brownian motion in order to minimise the expected inverse of that ratio.

Subsequently, \citet{DJZZ10} employed PDE methods to solve the same optimal prediction problems for a geometric Brownian motion, as well as analogous problems involving its ultimate minimum before a finite time. More recently, \citet{DYZ12} also used PDE methods to obtain a numerical solution for the problem of stopping a geometric Brownian motion as close as possible to its ultimate maximum, in an $L^2$-sense, with a finite time-horizon, while \citet{DZ12} solved the problem of stopping a geometric Brownian motion in order to maximise/minimise the ratio of its stopped value to its ultimate geometric/arithmetic average, over a finite time-interval. Finally, \citet{Coh10} extended a number of the previous optimal prediction problems for arithmetic and geometric Brownian motions to an infinite time-horizon, by including exponential discounting in their objective functions.

A few studies have analysed optimal prediction problems for the maxima and minima of processes other than arithmetic and geometric Brownian motions. For example, \citet{BDP11} solved the problem of stopping a stable L\'evy process of index $\alpha\in(1,2)$ as close as possible to its ultimate maximum, in an $L^p$-sense ($1<p<\alpha$), over a finite time-interval, while \citet{ET12} investigated the problem of stopping a mean-reverting diffusion in order to minimise the expected value of a convex loss function applied to the difference between its ultimate maximum and its stopped value, over a finite time-interval. Finally, \citet{GHP13} identified the stopping time with minimal $L^1$-distance to the time at which a positive transient diffusion achieves its ultimate minimum, over an infinite time-horizon. 

The problem of predicting the maximum or minimum of a stochastic process has obvious financial applications. This was first pointed out by \citet{Shi02}, who argued that optimal prediction provides a theoretical foundation for technical analysis. For example, choosing the optimal time to buy or sell a financial security before a given date, when its price follows a geometric Brownian motion, motivated the studies by \citet{DJZZ10}, \citet{DYZ12}, \citet{DZ12}, \citet{DP09}, and \citet{SXZ08}. An infinite time-horizon version of the above-mentioned problem was also considered by \citet{Coh10}. In addition, the latter study formulated the problem of measuring the regret associated with holding a toxic liability, whose value follows an arithmetic Brownian motion, in terms of optimal prediction. Finally, as a financial application of their results, \citet{GHP13} considered the infinite time-horizon problem of determining the optimal time to sell a risky security whose discounted price is a strict local martingale under a risk-neutral probability measure. Such a security may be interpreted as an example of an asset bubble, and the authors focussed on a version of the constant elasticity of variance model as a specific instance.

So far, only \citet{DPS08} have studied the problem of predicting the last-passage time of a stochastic process. Specifically, they identified the stopping time that minimises the $L^1$-distance to the last time an arithmetic Brownian motion reaches the origin, over a finite time-interval. That problem is the closest relative in the existing literature on optimal prediction to the problem studied here.

The remainder of the article is structured as follows. First, Section~\ref{sec2} reformulates the optimal prediction problem \eqref{eqsec1:OptPredProb1} under consideration, so that it is well-defined for all time-homogeneous diffusions satisfying Assumption~\eqref{eqsec1:Assump}. This more general problem is then solved in Section~\ref{sec3}, subject to the additional constraint that we restrict our attention to rules where $X$ is stopped as soon as its value exceeds some constant threshold. In Section~\ref{sec4} we demonstrate that the optimal threshold rule described above is in fact the optimal stopping policy for the general problem. Finally, Section~\ref{sec5} presents a number of illustrative examples.
\section{Reformulating the Problem}
\label{sec2}
Unfortunately, Problem~\eqref{eqsec1:OptPredProb1} is not well-defined for all diffusions satisfying the three conditions in Assumption~\eqref{eqsec1:Assump}. We shall substantiate this claim in detail, by appealing to the example of a Bessel process of dimension three, for which the above-mentioned conditions can be verified by inspecting Example~\ref{exsec5:Bes}.

To begin with, let
\begin{equation*}
\tau_y\coloneqq\inf\{t\geq 0\,|\,X_t=y\}
\end{equation*}
denote the first-passage time of $X$ to $y>0$. The following lemma computes the expected values of such stopping times, for the case when $X$ is a Bessel process of dimension three, started at the origin:

\begin{lemma}
\label{lemsec2:MeanFPTBes3D0}
Let $X$ be a Bessel process of dimension three. Then $\E_0(\tau_y)=y^2/3$, for all $y>0$.
\end{lemma}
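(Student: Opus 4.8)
The plan is to exploit the fact that a Bessel process of dimension three is (up to a deterministic scale change) a diffusion whose scale function is $\scale(x) = -1/x$ on $(0,\infty)$, with the origin as an entrance boundary. Since $\scale(0+) = -\infty$ and $\scale(\infty-) = 0$, the process started at $0$ is transient and drifts to $+\infty$, and the first-passage time $\tau_y$ is finite $\P_0$-a.s. for every $y > 0$. I would compute $\E_0(\tau_y)$ via the standard Itô/martingale argument applied to a suitable function of $X$ together with the time variable, or equivalently by solving the ODE $\tfrac12\Generator g = -1$ (with $\Generator$ the generator of the three-dimensional Bessel process, $\Generator g(x) = g''(x) + \tfrac{2}{x}g'(x)$ in the classical parametrisation) on $(0,y)$ with an appropriate boundary condition at $y$ and a regularity/entrance condition at $0$.

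Concretely, first I would fix the normalisation of the Bessel process so that its SDE reads $\d X_t = \tfrac1{X_t}\,\d t + \d B_t$, identify the generator, and then seek $g$ with $g''(x) + \tfrac{2}{x} g'(x) = -2$ on $(0,y)$. Solving this linear ODE gives a particular solution $g_p(x) = -x^2/3$ plus a homogeneous part $c_1 + c_2\scale(x) = c_1 - c_2/x$; the requirement that $g$ be bounded (indeed $\C11$) up to the entrance boundary $0$ forces $c_2 = 0$, and imposing $g(y) = 0$ fixes $c_1 = y^2/3$, so $g(x) = (y^2 - x^2)/3$. Then I would apply Itô's formula to $g(X_t) + t$ stopped at $\tau_y \wedge \zeta_n$, observe that the drift terms cancel by construction, take expectations (the stochastic integral term is a true martingale after localisation), and pass to the limit using monotone/dominated convergence together with the transience of $X$ — which guarantees $\tau_y \wedge \zeta_n \uparrow \tau_y < \infty$ and $X_{\tau_y} = y$ — to conclude $\E_0(g(X_0)) = \E_0(\tau_y)$, i.e. $\E_0(\tau_y) = g(0) = y^2/3$.

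The main obstacle is the behaviour at the origin: because $0$ is only an entrance (not regular) boundary and the coefficient $2/x$ blows up there, one must justify both the uniqueness of the bounded solution $g$ and the legitimacy of evaluating everything at the starting point $x = 0$. I would handle this by working with the localising sequence $\zeta_n$ from the paper, keeping the process strictly inside $(1/n, n)$, applying Itô's formula there where $g$ is smooth, and only afterwards letting $n \uparrow \infty$; the boundedness of $g$ on $[0,y]$ makes the limit on the $g(X_{\tau_y\wedge\zeta_n})$ term routine, while the finiteness of $\E_0(\tau_y)$ (equivalently, the integrability needed to make $\tau_y$ an admissible competitor) is exactly what the explicit value $y^2/3$ will certify a posteriori. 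An alternative, slightly cleaner route avoiding the entrance-boundary subtlety altogether is to use the well-known representation of the three-dimensional Bessel process as the radial part $|W_t|$ of a three-dimensional Brownian motion $W$ with $W_0 = 0$, and then compute $\E_0(\tau_y) = \E(\inf\{t : |W_t| = y\})$ directly from the fact that $|W_t|^2 - 3t$ is a martingale, which gives $\E(|W_{\tau_y}|^2) = y^2 = 3\,\E(\tau_y)$ by optional stopping after the same localisation; I would likely present this as the proof and relegate the ODE viewpoint to a remark.
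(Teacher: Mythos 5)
Your argument is correct, but it proceeds along a genuinely different route from the paper. The paper's proof is a direct computation from the explicit law of $\tau_y$: it quotes a series representation of $\P_0(\tau_y>t)$ in terms of Bessel functions of the first kind, identifies the zeros $j_{1/2,k}=k\pi$, integrates the tail probability term by term, and evaluates the resulting alternating series $\sum_k(-1)^{k-1}/k^2=\pi^2/12$ (Dirichlet's eta function at $2$) to arrive at $y^2/3$. You instead solve the Poisson-type equation for the generator, $\tfrac12 g''+\tfrac1x g'=-1$ on $(0,y)$ with $g(y)=0$ and boundedness at the entrance boundary, obtaining $g(x)=(y^2-x^2)/3$, and then verify via It\^o's formula, localisation and optional stopping that $\E_0(\tau_y)=g(0)$; your alternative via the radial part of three-dimensional Brownian motion, using that $|W_t|^2-3t$ is a martingale and that $|W_{t\wedge\tau_y}|^2\leq y^2$ is bounded, is even cleaner and sidesteps the entrance-boundary issue at the origin entirely. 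Both approaches are sound: yours is more elementary and self-contained (no special-function identities or external distributional formulas are needed, and the verification step, not the uniqueness of the bounded ODE solution, is what carries the proof), while the paper's computation buys more than the statement requires, namely the full survival function of $\tau_y$, at the cost of relying on the quoted series and classical identities for $J_{1/2}$, $J_{3/2}$ and $\eta(2)$. One small caution if you write up the localisation route: the sequence $\zeta_n$ from the paper is degenerate under $\P_0$ (since $X_0=0\notin(1/n,n)$), so you should localise with, say, the exit times of $(1/n,y)$ after noting $X_t>0$ for $t>0$, or simply present the $|W|$ argument as you suggest.
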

\begin{proof}
According to \citet{HM13},
\begin{equation*}
\P_0(\tau_y>t)=\frac{\sqrt{2}}{\Gamma(3/2)}\sum_{k=1}^\infty\frac{1}{\sqrt{j_{1/2,k}}J_{3/2}(j_{1/2,k})}
\e^{-\frac{j_{1/2,k}^2}{2y^2}t},
\end{equation*}
for all $t\geq 0$ and all $x>0$, where $\Gamma$ denotes the gamma function, $J_\nu$ denotes the Bessel function of the first kind of order $\nu$, and $0<j_{\nu,1}<j_{\nu,2}<\ldots$ are the positive roots of $J_\nu$. Since
\begin{equation*}
J_{1/2}(x)=\sqrt{\frac{2}{\pi x}}\sin x,
\end{equation*}
for all $x\in\R$ \citep[see e.g.][Section~4.6]{AAR99}, it follows that $j_{1/2,k}=k\pi$, for each $k\in\N$. Furthermore, since
\begin{equation*}
J_{3/2}(x)=\sqrt{\frac{2}{\pi x}}\frac{\sin x-x\cos x}{x},
\end{equation*}
for all $x\in\R$ \citep[see e.g.][Section~4.6]{AAR99}, we obtain
\begin{equation*}
J_{3/2}(j_{1/2,k})=(-1)^{k-1}\sqrt{\frac{2}{k\pi^2}},
\end{equation*}
for each $k\in\N$. Combining these observations with $\Gamma(3/2)=\sqrt{\pi}/2$ yields
\begin{equation*}
\P_0(\tau_y>t)=2\sum_{k=1}^\infty(-1)^{k-1}\e^{-\frac{(k\pi)^2}{2y^2}t},
\end{equation*}
for all $t\geq 0$ and all $y>0$. Finally, we get
\begin{equation*}
\E_0(\tau_y)=\int_0^\infty\P_0(\tau_y>t)\,\d t
=\frac{4y^2}{\pi^2}\sum_{k=1}^\infty\frac{(-1)^{k-1}}{k^2}=\frac{y^2}{3},
\end{equation*}
for all $y>0$, with the help of \citet{Fin03}, Section~1.6, for the final equality.
\end{proof}

\begin{remark}
The appearance of Dirichlet's eta function
\begin{equation*}
\eta(x)\coloneqq\sum_{k=1}^\infty\frac{(-1)^{k-1}}{k^x},
\end{equation*}
for all $x\in\R$, in the previous proof is interesting. Specifically, the expected value of $\tau_y$ admits the following representation, when $X$ is a Bessel process of dimension three, started at the origin: 
\begin{equation*}
\E_0(\tau_y)=\frac{4y^2}{\pi^2}\eta(2),
\end{equation*}
for all $y>0$.
\end{remark}

Next, we demonstrate that $\gamma_z$ is not integrable when $X$ is a Bessel process of dimension three, started at the origin:

\begin{lemma}
\label{emsec2:MeanLPTBes3D0}
Let $X$ be a Bessel process of dimension three. Then $\E_0(\gamma_z)=\infty$.
\end{lemma}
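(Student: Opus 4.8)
The plan is to reduce the computation of $\E_0(\gamma_z)$ to an integral of tail probabilities and then to show that these tails decay too slowly for the integral to converge. Since $\gamma_z\geq 0$, Tonelli's theorem gives $\E_0(\gamma_z)=\int_0^\infty\P_0(\gamma_z>t)\,\d t$, so it suffices to produce a lower bound for $\P_0(\gamma_z>t)$ that fails to be integrable at infinity. (Alternatively one could split $\E_0(\gamma_z)=\E_0(\tau_z)+\E_z(\gamma_z)$ by conditioning at the first-passage time $\tau_z$, so that Lemma~\ref{lemsec2:MeanFPTBes3D0} reduces the matter to showing $\E_z(\gamma_z)=\infty$; but the direct route seems cleaner.)

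The key step is the identity $\P_0(\gamma_z>t)=\E_0\bigl(1\wedge(z/X_t)\bigr)$, valid for every $t>0$. Because $z$ is regular for itself, the event $\{\gamma_z>t\}$ agrees up to a null set with the event that $X$ returns to $z$ at some time strictly after $t$; conditioning on $\sigalgebra{F}_t$ and invoking the Markov property, the latter has conditional probability $h(X_t)$, where $h(y)\coloneqq\P_y(\tau_z<\infty)$. Since a Bessel process of dimension three has scale function $\scale(x)=-1/x$ and is transient to $+\infty$, it crosses every level above its starting point, so $h(y)=1$ for $0<y\leq z$, while for $y>z$ a standard scale-function computation (letting the upper level tend to $\infty$ in the two-sided exit formula and using $\scale(\infty-)=0$) gives $h(y)=\scale(y)/\scale(z)=z/y$; hence $h(y)=1\wedge(z/y)$ for all $y>0$. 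Verifying this identity carefully — in particular the reduction of $\{\gamma_z>t\}$ to a return event and the evaluation of $h$ — is the step I expect to demand the most care, although each ingredient is classical.

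It then remains to bound the integrand from below. Fix $t\geq z^2$, so that $\sqrt{t}\geq z$. On the event $\{X_t\leq\sqrt{t}\}$ one has $1\wedge(z/X_t)\geq z/\sqrt{t}$: if $X_t\leq z$ the left-hand side equals $1\geq z/\sqrt{t}$, and if $z\leq X_t\leq\sqrt{t}$ it equals $z/X_t\geq z/\sqrt{t}$. Consequently
\begin{equation*}
\P_0(\gamma_z>t)=\E_0\bigl(1\wedge(z/X_t)\bigr)\geq\frac{z}{\sqrt{t}}\,\P_0\bigl(X_t\leq\sqrt{t}\bigr).
\end{equation*}
Now I would invoke the Brownian scaling of the Bessel process started at the origin, $X_t\eqdist\sqrt{t}\,X_1$, which yields $\P_0(X_t\leq\sqrt{t})=\P_0(X_1\leq 1)=:c\in(0,1)$, a constant independent of $t$. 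Therefore $\P_0(\gamma_z>t)\geq cz/\sqrt{t}$ for every $t\geq z^2$, and hence
\begin{equation*}
\E_0(\gamma_z)\geq cz\int_{z^2}^\infty\frac{\d t}{\sqrt{t}}=\infty,
\end{equation*}
as claimed.
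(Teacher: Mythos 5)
Your argument is correct, but it takes a genuinely different route from the paper. The paper's proof is a direct computation: it imports the explicit law of $\gamma_z$ for a three-dimensional Bessel process started at the origin from Getoor's last-exit result, $\P_0(\gamma_z\in\d t)=\frac{z}{\sqrt{2\pi}}t^{-3/2}\e^{-z^2/(2t)}\,\d t$, evaluates the truncated first moment $\E_0\bigl(\indicator{\{\gamma_z\leq T\}}\gamma_z\bigr)$ in closed form, and lets $T\uparrow\infty$ by monotone convergence. You instead avoid the law of $\gamma_z$ entirely: you write $\E_0(\gamma_z)=\int_0^\infty\P_0(\gamma_z>t)\,\d t$, identify $\P_0(\gamma_z>t)=\E_0\bigl(\scale(X_t)/\scale(z)\wedge 1\bigr)=\E_0\bigl(1\wedge(z/X_t)\bigr)$ via the Markov property and the two-sided exit formula (this is precisely the identity the paper itself establishes later, in the proof of Proposition~\ref{propsec2:OptPredProb3}, just not in this lemma), and then use Brownian scaling, $X_t\eqdist\sqrt{t}\,X_1$ under $\P_0$, to obtain the non-integrable tail lower bound $cz/\sqrt{t}$. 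All steps check out: the reduction of $\{\gamma_z>t\}$ to a post-$t$ return event is exact up to the null set $\{X_t=z\}$, the evaluation $h(y)=1\wedge(z/y)$ follows from transience and $\scale(y)=-1/y$, and the elementary bound on $\{X_t\leq\sqrt{t}\}$ for $t\geq z^2$ is valid. What the paper's route buys is an exact formula for the truncated expectation (hence the precise rate of divergence); what your route buys is independence from Getoor's formula and greater generality — the same tail-estimate argument shows non-integrability of $\gamma_z$ for any transient diffusion for which $\E_x\bigl(\scale(X_t)/\scale(z)\wedge 1\bigr)$ fails to be integrable in $t$, rather than relying on the special Bessel structure.
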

\begin{proof}
From \citet{Get79} we get
\begin{align*}
\E_0\Bigl(\indicator{\{\gamma_z\leq T\}}\gamma_z\Bigr)
&=\int_0^Tt\P_0(\gamma_z\in\d t)
=\int_0^Tt\frac{z}{\sqrt{2\pi}}\frac{1}{t^{3/2}}\e^{-\frac{z^2}{2t}}\,\d t\\
&=\frac{2z}{\sqrt{2\pi}}\biggl(\sqrt{T}\e^{-\frac{z^2}{2T}}-\sqrt{2\pi}z\biggl(1-\Phi\biggl(\frac{z}{\sqrt{T}}\biggr)\biggr)\biggr),
\end{align*}
for all $T>0$, where $\Phi$ denotes the CDF of a standard normal random variable. It then follows that
\begin{equation*}
\E_0(\gamma_z)=\lim_{T\uparrow\infty}\E_0\Bigl(\indicator{\{\gamma_z\leq T\}}\gamma_z\Bigr)=\infty,
\end{equation*}
by an application of the monotone convergence theorem.
\end{proof}

Using Lemma~\ref{lemsec2:MeanFPTBes3D0} and the strong Markov property, we are able to extend Lemma~\ref{emsec2:MeanLPTBes3D0} to the case of a Bessel process of dimension three, with an arbitrary initial value:

\begin{proposition}
\label{propsec2:MeanLPTBes3Dx}
Let $X$ be a Bessel process of dimension three. Then $\E_x(\gamma_z)=\infty$, for all $x>0$.
\end{proposition}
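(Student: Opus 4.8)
The plan is to transfer the conclusion of Lemma~\ref{emsec2:MeanLPTBes3D0} from the origin to an arbitrary starting point $x>0$, by running the process from $0$ until it first reaches $x$ and then invoking the strong Markov property at that instant. Throughout, fix $x>0$, work under $\P_0$, and adopt the convention $\sup\emptyset\coloneqq 0$, so that $\gamma_z$ is everywhere well defined (consistent with Section~\ref{sec1}).

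First I would record two elementary facts. Since a Bessel process of dimension three started at the origin has continuous paths and satisfies $X_t\uparrow\infty$ $\P_0$-a.s., it hits every positive level; hence $\P_0(\tau_x<\infty)=1$, with $X_{\tau_x}=x$ on this (full-measure) event. Second, I claim the pathwise bound
\begin{equation*}
\gamma_z\leq\tau_x+\gamma_z\circ\theta_{\tau_x}
\end{equation*}
holds $\P_0$-a.s., where $\theta$ denotes the usual shift operator. Indeed, if a given path visits $z$ at some time $t\geq\tau_x$, then its overall last visit to $z$ agrees with its last visit to $z$ after $\tau_x$, which is exactly $\tau_x+\gamma_z\circ\theta_{\tau_x}$; while if the path makes no visit to $z$ at or after $\tau_x$, then every visit of the path to $z$ precedes $\tau_x$, so that $\gamma_z\leq\tau_x\leq\tau_x+\gamma_z\circ\theta_{\tau_x}$.

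Taking $\E_0$-expectations across this inequality gives $\E_0(\gamma_z)\leq\E_0(\tau_x)+\E_0\bigl(\gamma_z\circ\theta_{\tau_x}\bigr)$. The left-hand side is infinite, by Lemma~\ref{emsec2:MeanLPTBes3D0}, while $\E_0(\tau_x)=x^2/3<\infty$, by Lemma~\ref{lemsec2:MeanFPTBes3D0}. For the remaining term I would note that $\gamma_z$ is a non-negative measurable functional of the path of $X$, so the strong Markov property at the stopping time $\tau_x$ (extended from bounded to non-negative functionals by monotone convergence) yields $\E_0\bigl(\gamma_z\circ\theta_{\tau_x}\mid\sigalgebra{F}_{\tau_x}\bigr)=\E_{X_{\tau_x}}(\gamma_z)=\E_x(\gamma_z)$ $\P_0$-a.s. on $\{\tau_x<\infty\}$, and hence $\E_0\bigl(\gamma_z\circ\theta_{\tau_x}\bigr)=\E_x(\gamma_z)$. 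Combining these bounds then forces $\E_x(\gamma_z)=\infty$, as required.

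The only genuinely delicate point is the bookkeeping behind the pathwise inequality — in particular, handling the event on which the shifted process never returns to $z$ and applying the $\sup\emptyset=0$ convention consistently on both sides — together with the (routine) verification that the strong Markov property may legitimately be applied to the last-exit functional $\gamma_z$, which is not itself a stopping time. Everything else is immediate from the two preceding lemmas.
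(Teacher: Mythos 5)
Your argument is correct, and it is a genuinely streamlined variant of the paper's proof. The paper splits into two cases: for $x\in(0,z]$ it uses the exact identity $\gamma_z=\tau_x+\gamma_z\circ\vartheta_{\tau_x}$ under $\P_0$ (valid there because $\tau_x\leq\gamma_z$ forces the last visit to $z$ to occur after $\tau_x$), and for $x>z$ it switches to $\P_x$, decomposes at $\tau_z$ on the event $\{\tau_z<\infty\}$, and concludes from $\P_x(\tau_z<\infty)>0$ together with $\E_z(\gamma_z)=\infty$ obtained in the first case. You instead stay under $\P_0$ for every $x>0$ and replace the identity by the one-sided pathwise bound $\gamma_z\leq\tau_x+\gamma_z\circ\theta_{\tau_x}$, which holds for all $x$ (including $x>z$, where the shifted path may never revisit $z$ and the $\sup\emptyset=0$ convention absorbs that case). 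Since only a lower bound on $\E_x(\gamma_z)$ is needed, the inequality suffices: $\infty=\E_0(\gamma_z)\leq\E_0(\tau_x)+\E_x(\gamma_z)$ with $\E_0(\tau_x)=x^2/3<\infty$ forces $\E_x(\gamma_z)=\infty$. Your application of the strong Markov property at $\tau_x$ to the non-negative path functional $\gamma_z$ is exactly the step the paper also relies on (and is legitimate even though $\gamma_z$ is not a stopping time, as the paper's own remark emphasises). What your route buys is the elimination of the case split and of the second decomposition at $\tau_z$, at the modest cost of giving up the exact identity \eqref{eqpropsec2:MeanLPTBes3Dx1}, which the paper's version records and which is of some independent interest; for the purpose of proving the proposition itself, your unified inequality argument is cleaner.
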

\begin{proof}
Let $x\in(0,z]$, and note that $\tau_x\leq\gamma_z<\infty$ $P_0$-a.s. For $P_0$-a.a. $\omega\in\Omega$, we then obtain
\begin{align*}
\gamma_z(\omega)&=\sup\{t\geq 0\,|\,X_t(\omega)=z\}
=\tau_x(\omega)+\sup\{t\geq 0\,|\,X_{\tau_x(\omega)+t}(\omega)=z\}\\
&=\tau_x(\omega)+\sup\{t\geq 0\,|\,X_t(\vartheta_{\tau_x}\omega)=z\}
=\tau_x(\omega)+\gamma_z(\vartheta_{\tau_x}\omega),
\end{align*}
whence
\begin{equation}
\label{eqpropsec2:MeanLPTBes3Dx1}
\gamma_z=\tau_x+\gamma_z\circ\vartheta_{\tau_x}\qquad\text{$\P_0$-a.s.},
\end{equation}
where $\vartheta_{\tau_x}$ denotes the shift operator associated with the stopping time $\tau_x$ \citep[see e.g.][Section~II.4.3.4]{PS06}. Consequently,
\begin{equation*}
\E_x(\gamma_z)=\E_0(\E_x(\gamma_z))=\E_0(\E_0(\gamma_z\circ\vartheta_{\tau_x}\,|\,\sigalgebra{F}_{\tau_x}))=\E_0(\gamma_z\circ\vartheta_{\tau_x})=\E_0(\gamma_z)-\E_0(\tau_x)=\infty,
\end{equation*}
by virtue of Lemmas~\ref{lemsec2:MeanFPTBes3D0} and \ref{emsec2:MeanLPTBes3D0}, and the strong Markov property of $X$. Next, let $x>z$. A similar argument to the one above establishes the identity
\begin{equation}
\label{eqpropsec2:MeanLPTBes3Dx2}
\indicator{\{\tau_z<\infty\}}\gamma_z=\indicator{\{\tau_z<\infty\}}(\tau_z+\gamma_z\circ\vartheta_{\tau_z})\qquad\text{$\P_x$-a.s.}
\end{equation}
It then follows that
\begin{align*}
\E_x(\gamma_z)&=\E_x\Bigl(\indicator{\{\gamma_z>0\}}\gamma_z\Bigr)
=\E_x\Bigl(\indicator{\{\tau_z<\infty\}}\gamma_z\Bigr)
=\E_x\Bigl(\indicator{\{\tau_z<\infty\}}\tau_z\Bigr)+\E_x\Bigl(\indicator{\{\tau_z<\infty\}}\gamma_z\circ\vartheta_{\tau_z}\Bigr)\\
&=\E_x\Bigl(\indicator{\{\tau_z<\infty\}}\tau_z\Bigr)+\E_x\Bigl(\indicator{\{\tau_z<\infty\}}\E_x(\gamma_z\circ\vartheta_{\tau_z}\,|\,\sigalgebra{F}_{\tau_z})\Bigr)\\
&=\E_x\Bigl(\indicator{\{\tau_z<\infty\}}\tau_z\Bigr)+\E_x\Bigl(\indicator{\{\tau_z<\infty\}}\E_x(\gamma_z)\Bigr)
=\E_x\Bigl(\indicator{\{\tau_z<\infty\}}\tau_z\Bigr)+\P_x(\tau_z<\infty)\E_z(\gamma_z)
=\infty,
\end{align*}
since the recurrence of $X$ ensures that $\P_x(\tau_z<\infty)>0$, while $\E_z(\gamma_z)=\infty$ was established earlier.
\end{proof}

\begin{remark}
The following identity for shift operators is well-known:
\begin{equation*}
\tau=\sigma+\tau\circ\vartheta_\sigma,
\end{equation*}
where $\tau$ is the first-entry time into a set and $\sigma$ is an arbitrary stopping time, such that $\sigma\leq\tau$ \citep[see e.g.][Equations~4.1.25 and 7.0.7]{PS06}. Although \eqref{eqpropsec2:MeanLPTBes3Dx1} and \eqref{eqpropsec2:MeanLPTBes3Dx2} are strongly reminiscent of this identity, the fact that $\gamma_z$ is not a stopping time prevents us from applying it directly.
\end{remark}

Now, suppose that $X$ is a Bessel process of dimension three, and let $\tau$ be an arbitrary integrable stopping time. Then Proposition~\ref{propsec2:MeanLPTBes3Dx} implies that
\begin{equation*}
\E_x(|\gamma_z-\tau|)\geq\E_x(\gamma_z)-\E_x(\tau)=\infty,
\end{equation*}
for all $x>0$, whence $V(x)=\infty$. In other words, Problem~\eqref{eqsec1:OptPredProb1} is not a well-defined optimal stopping problem.

We can remedy the issue highlighted above by formulating an optimal stopping problem that is equivalent to Problem~\eqref{eqsec1:OptPredProb1} whenever $\gamma_z$ is integrable, but which remains meaningful even when $\gamma_z$ is not integrable. To do so, we first observe that
\begin{equation*}
\begin{split}
|\gamma_z-\tau|&=(\gamma_z-\tau)^++(\tau-\gamma_z)^+
=\gamma_z-\gamma_z\wedge\tau+(\tau-\gamma_z)^+\\
&=\gamma_z-\int_0^\tau\indicator{\{\gamma_z>t\}}\,\d t+\int_0^\tau\indicator{\{\gamma_z\leq t\}}\,\d t
=\gamma_z+\int_0^\tau\bigl(2\indicator{\{\gamma_z\leq t\}}-1\bigr)\,\d t,
\end{split}
\end{equation*}
for any stopping time $\tau$. It is thus natural to replace Problem~\eqref{eqsec1:OptPredProb1} with the following optimal stopping problem:
\begin{equation}
\label{eqsec2:OptPredProb2}
\widehat{V}(x)\coloneqq\inf_\tau\E_x(|\gamma_z-\tau|-\gamma_z)
=\inf_\tau\E_x\biggl(\int_0^\tau\bigl(2\indicator{\{\gamma_z\leq t\}}-1\bigr)\,\d t\biggr),
\end{equation}
for all $x>0$, where the infimum is computed over all integrable stopping times $\tau$. Note that the optimal stopping time $\tau_*$ for Problem~\eqref{eqsec2:OptPredProb2} is also the solution for Problem~\eqref{eqsec1:OptPredProb1} when $\gamma_z$ is integrable. On the other hand, even when $\gamma_z$ is not integrable, we have
\begin{equation*}
\widehat{V}(x)\leq\E_x\biggl(\int_0^\tau\bigl(2\indicator{\{\gamma_z\leq t\}}-1\bigr)\,\d t\biggr)
\leq\E_x(\tau)<\infty,
\end{equation*}
where $\tau$ is an arbitrary integrable stopping time.

Rather than attacking Problem~\eqref{eqsec2:OptPredProb2} directly, we shall focus instead on the following more tractable reformulation of that problem:
\begin{proposition}
\label{propsec2:OptPredProb3}
The following optimal stopping problem is equivalent to Problem~\eqref{eqsec2:OptPredProb2}:
\begin{equation}
\label{eqpropsec2:OptPredProb3_1}
\widehat{V}(x)\coloneqq\inf_\tau\E_x\biggl(\int_0^\tau c(X_t)\,\d t\biggr),
\end{equation}
for all $x>0$, where
\begin{equation}
\label{eqpropsec2:OptPredProb3_2}
c(x)\coloneqq 1-2\biggl(\frac{\scale(x)}{\scale(z)}\wedge 1\biggr),
\end{equation}
and the infimum is computed over all integrable stopping times $\tau$.
\end{proposition}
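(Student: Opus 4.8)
The plan is to show that for \emph{every} integrable stopping time $\tau$ the objective functionals in \eqref{eqsec2:OptPredProb2} and \eqref{eqpropsec2:OptPredProb3_1} agree, so that the value functions — and hence the optimal stopping times — coincide. The idea is to replace the non-adapted integrand $2\indicator{\{\gamma_z\leq t\}}-1$ by its conditional expectation given $\sigalgebra{F}_t$. Since $\{t<\tau\}\in\sigalgebra{F}_t$ for each $t\geq 0$, and since $\int_0^\infty\E_x(\indicator{\{t<\tau\}})\,\d t=\E_x(\tau)<\infty$ dominates the integrals in question, Fubini's theorem yields
\begin{equation*}
\E_x\biggl(\int_0^\tau\bigl(2\indicator{\{\gamma_z\leq t\}}-1\bigr)\,\d t\biggr)
=\int_0^\infty\E_x\Bigl(\indicator{\{t<\tau\}}\bigl(2\,\P_x(\gamma_z\leq t\mid\sigalgebra{F}_t)-1\bigr)\Bigr)\,\d t,
\end{equation*}
so the entire proof reduces to identifying $\P_x(\gamma_z\leq t\mid\sigalgebra{F}_t)$.

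First I would use the Markov property of $X$. On $\{t<\zeta\}$ the event $\{\gamma_z\leq t\}$ coincides with the event $\{X_s\neq z\text{ for all }s>t\}$, so $\P_x(\gamma_z\leq t\mid\sigalgebra{F}_t)=q(X_t)$, where $q(y)\coloneqq\P_y(\tau_z=\infty)$; the value of $q$ at $y=z$ plays no role because $\int_0^\infty\indicator{\{X_t=z\}}\,\d t=0$ $\P_x$-a.s. Moreover, after the explosion time $\zeta$ the process has escaped to $\infty$ and can never revisit $z$, so both $2\indicator{\{\gamma_z\leq t\}}-1$ and $c(X_t)$ reduce to $1$ there under the usual cemetery-state convention (consistently with $\lim_{x\uparrow\infty}c(x)=1$); this takes care of the behaviour at and beyond $\zeta$.

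Next I would compute $q$ from the scale function. For $z<y<n$, applying the optional stopping theorem to the bounded continuous local martingale $\scale(X_{t\wedge\tau_z\wedge\tau_n})$ gives $\P_y(\tau_z<\tau_n)=\bigl(\scale(n)-\scale(y)\bigr)/\bigl(\scale(n)-\scale(z)\bigr)$; letting $n\uparrow\infty$ and invoking $\scale(\infty-)=0$ together with the transience of $X$ yields $\P_y(\tau_z<\infty)=\scale(y)/\scale(z)$ for $y\geq z$. For $0<y\leq z$, transience towards $\infty$ combined with the inaccessibility of the origin forces the path to cross the level $z$, so $\P_y(\tau_z<\infty)=1=\scale(y)/\scale(z)\wedge 1$. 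Hence $q(y)=1-\bigl(\scale(y)/\scale(z)\wedge 1\bigr)$ for all $y>0$, so that $2\,q(X_t)-1=c(X_t)$ with $c$ as in \eqref{eqpropsec2:OptPredProb3_2}. Substituting this back into the display above and running Fubini in the reverse direction shows $\E_x\bigl(\int_0^\tau(2\indicator{\{\gamma_z\leq t\}}-1)\,\d t\bigr)=\E_x\bigl(\int_0^\tau c(X_t)\,\d t\bigr)$ for every integrable $\tau$; taking infima finishes the argument (and identifies the optimal stopping times of the two problems).

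I expect the delicate point to be the rigorous justification of $\P_x(\gamma_z\leq t\mid\sigalgebra{F}_t)=q(X_t)$ — that is, phrasing the ``no return to $z$ after time $t$'' event so that the Markov property applies cleanly, and disposing of both the explosion time and the $\P_x$-null set $\{X_t=z\}$ — rather than the scale-function evaluation of $q$ or the two applications of Fubini, which are routine.
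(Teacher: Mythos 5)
Your proposal is correct and follows essentially the same route as the paper's proof: a Fubini/tower-property argument to project the integrand $2\indicator{\{\gamma_z\leq t\}}-1$ onto $\sigalgebra{F}_t$, the Markov property to identify $\P_x(\gamma_z\leq t\mid\sigalgebra{F}_t)$ with a function of $X_t$, and the scale-function formula $\P_y(\tau_z<\infty)=\scale(y)/\scale(z)\wedge 1$ (which you derive by optional stopping where the paper simply cites Karlin--Taylor). Your explicit treatment of the explosion time and of the null set $\{X_t=z\}$ is a minor refinement the paper leaves implicit, but the substance of the argument is the same.
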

\begin{proof}
To begin with, suppose $x>z$, in which case the monotone convergence theorem gives
\begin{equation*}
\P_x\biggl(\inf_{t\geq 0}X_t\leq z\biggr)=\P_x(\tau_z<\infty)
=\lim_{y\uparrow\infty}\P_x(\tau_z<\tau_y)
=\lim_{y\uparrow\infty}\frac{\scale(y)-\scale(x)}{\scale(y)-\scale(z)}
=\frac{\scale(x)}{\scale(z)},
\end{equation*}
by virtue of \citet{KT81}, Equation~15.3.10, and the second condition in Assumption~\eqref{eqsec1:Assump}. Consequently,
\begin{equation*}
\P_x\biggl(\inf_{t\geq 0}X_t\leq z\biggr)=\frac{\scale(x)}{\scale(z)}\wedge 1,
\end{equation*}
for all $x>0$. The Markov property of $X$ then yields
\begin{equation*}
\P_x(\gamma_z>t\,|\,\sigalgebra{F}_t)
=\P_x(\gamma_z\circ\vartheta_t>0\,|\,\sigalgebra{F}_t)
=\P_{X_t}(\gamma_z>0)
=\P_{X_t}\biggl(\inf_{s\geq 0}X_s\leq z\biggr)
=\frac{\scale(X_t)}{\scale(z)}\wedge 1,
\end{equation*}
for all $t\geq 0$ and all $x>0$. From this it follows that
\begin{equation*}
\begin{split}
\E_x\biggl(&\int_0^\tau\bigl(2\indicator{\{\gamma_z\leq t\}}-1\bigr)\,\d t\biggr)
=\E_x\biggl(\int_0^\infty\indicator{\{\tau>t\}}\bigl(2\indicator{\{\gamma_z\leq t\}}-1\bigr)\,\d t\biggr)\\
&=\int_0^\infty\E_x\Bigl(\indicator{\{\tau>t\}}\E_x\Bigl(\bigl(2\indicator{\{\gamma_z\leq t\}}-1\bigr)\,\bigl|\,\sigalgebra{F}_t\Bigr)\Bigr)\,\d t
=\int_0^\infty\E_x\Bigl(\indicator{\{\tau>t\}}\bigl(2\P_x(\gamma_z\leq t\,|\,\sigalgebra{F}_t)-1\bigr)\Bigr)\,\d t\\
&=\E_x\biggl(\int_0^\tau\Bigl(1-2\P_x(\gamma_z>t\,|\,\sigalgebra{F}_t)\Bigr)\,\d t\biggr)
=\E_x\biggl(\int_0^\tau\biggl(1-2\biggl(\frac{\scale(X_t)}{\scale(z)}\wedge 1\biggr)\biggr)\,\d t\biggr),
\end{split}
\end{equation*}
for all $x>0$ and any stopping time $\tau$.
\end{proof}

\begin{figure}[h]
\centering
\includegraphics[scale=1]{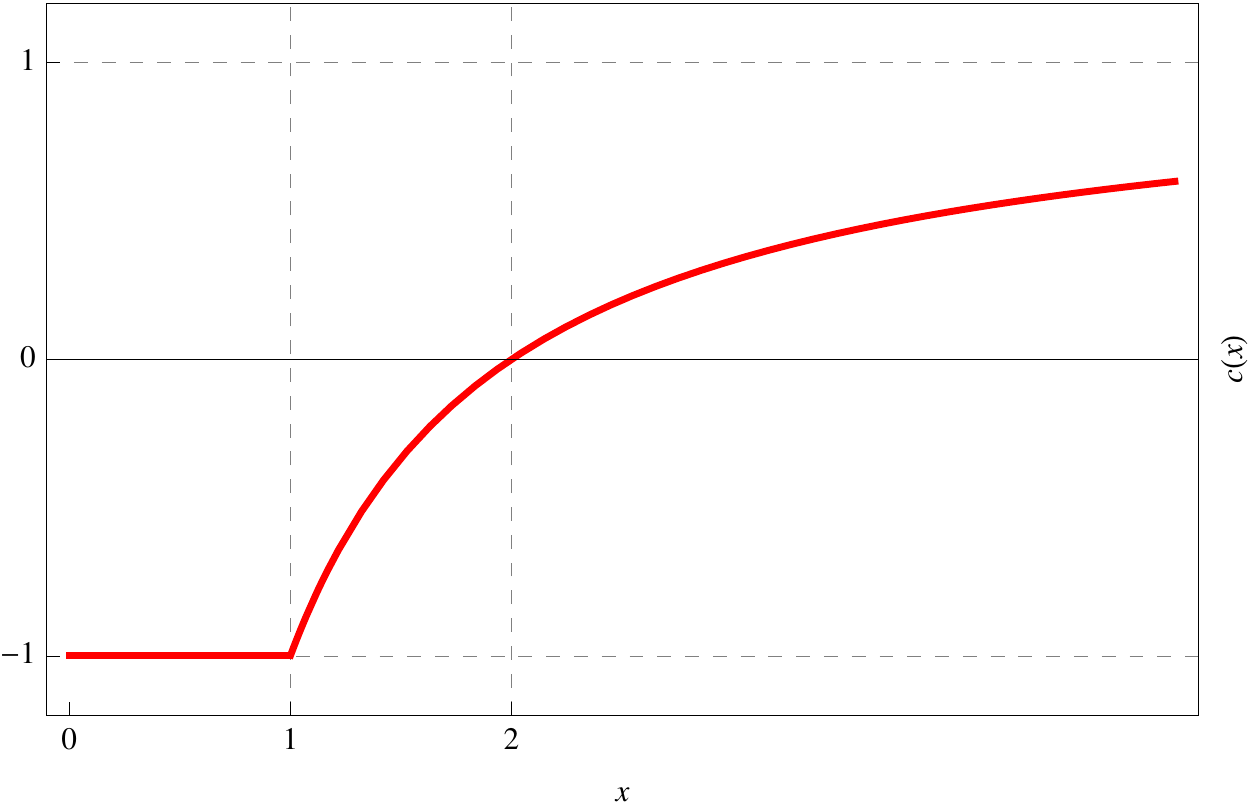}
\caption{The function $c$, defined by \eqref{eqpropsec2:OptPredProb3_2}, for a Bessel process of dimension three, with $z=1$.}
\label{figsec2:Integrand}
\end{figure}

Figure~\ref{figsec2:Integrand} plots the function $c$, given by \eqref{eqpropsec2:OptPredProb3_2}, for a Bessel process of dimension three. In that case,
\begin{equation*}
c(x)=1-2\biggl(\frac{z}{x}\wedge 1\biggr),
\end{equation*}
for all $x>0$, since $\scale(x)=-1/x$ (see Example~\ref{exsec5:Bes}). The following salient features of $c$ are illustrated by Figure~\ref{figsec2:Integrand}: First, it is monotonically increasing, with $c(x)=-1$, for all $x\in(0,z]$, and $\lim_{x\uparrow\infty}c(x)=1$. Second, it is bounded, with $|c(x)|\leq 1$, for all $x>0$. Finally, due to its monotonicity, it possesses a unique root at $\scale^{-1}\bigl(\scale(z)/2\bigr)>z$. In the case of the Bessel process of dimension three, this root is simply $2z$.
\section{Solving a Restricted Version of the Problem}
\label{sec3}
In this section we formulate and solve a restricted version of Problem~\eqref{eqpropsec2:OptPredProb3_1}, where the infimum is computed over stopping times of the form
\begin{equation*}
\sigma_r\coloneqq\inf\{t\geq 0\,|\,X_t\geq r\},
\end{equation*}
where $r>0$. To do so, we must first establish that such stopping times are in fact admissible, which means to say that they are integrable. This is achieved by setting $f(x)\coloneqq 1$, for all $x>0$, in the statement of the following lemma:

\begin{lemma}
\label{lemsec3:ExpIntRep}
Let $\function{f}{(0,\infty)}{\R}$ be a continuous function satisfying $|f(0+)|<\infty$. Then
\begin{equation*}
\E_x\biggl(\int_0^{\sigma_r}f(X_t)\,\d t\biggr)=
\begin{dcases}
\int_0^x\bigl(\scale(r)-\scale(x)\bigr)f(y)\,\speed(\d y)+\int_x^r\bigl(\scale(r)-\scale(y)\bigr)f(y)\,\speed(\d y)&\text{if $x\leq r$};\\
0&\text{if $x\geq r$},
\end{dcases}
\end{equation*}
for all $x>0$ and all $r>0$. In particular,
\begin{equation*}
\E_x\biggl(\int_0^{\sigma_r}|f(X_t)|\,\d t\biggr)<\infty,
\end{equation*}
for all $x>0$ and all $r>0$.
\end{lemma}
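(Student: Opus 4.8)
The plan is to recognise the left-hand side as the integral of $f$ against the Green's function (the expected occupation density relative to the speed measure $\speed$) of $X$ killed at $\sigma_r$, and to obtain that Green's function by a limiting argument from the classical formula for a diffusion killed on exiting a \emph{bounded} interval. Since $\sigma_r=0$ $\P_x$-a.s.\ whenever $x\geq r$, only the case $x<r$ needs attention; fix such an $x$. For each $n\in\N$ with $1/n<x$, the random time $\tau_{1/n}\wedge\sigma_r$ is the first exit time of $X$ from $(1/n,r)$, and the classical theory of one-dimensional diffusions, applied with the scale function $\scale$ and speed measure $\speed$ of \eqref{eqsec1:ScaleSpeed1}, yields
\begin{equation*}
\E_x\biggl(\int_0^{\tau_{1/n}\wedge\sigma_r}g(X_t)\,\d t\biggr)=\int_{1/n}^r G_n(x,y)\,g(y)\,\speed(\d y),
\end{equation*}
for every non-negative Borel function $g$, where $G_n(x,y)\coloneqq\bigl(\scale(x\wedge y)-\scale(1/n)\bigr)\bigl(\scale(r)-\scale(x\vee y)\bigr)/\bigl(\scale(r)-\scale(1/n)\bigr)$ \citep[see e.g.][Section~15.3]{KT81}.

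Next I would let $n\uparrow\infty$. The transience of $X$, which follows from Assumption~\eqref{eqsec1:Assump}, guarantees that $\sigma_r<\infty$ and $\tau_{1/n}\uparrow\infty$ $\P_x$-a.s., and hence that $\tau_{1/n}\wedge\sigma_r\uparrow\sigma_r$ $\P_x$-a.s. On the other hand, rewriting $G_n(x,y)=\bigl(\scale(r)-\scale(x\vee y)\bigr)\bigl(\scale(x\wedge y)-\scale(1/n)\bigr)/\bigl(\scale(r)-\scale(1/n)\bigr)$ and using $\scale(1/n)\downarrow\scale(0+)=-\infty$ together with $\scale(x\wedge y)\leq\scale(r)$, one sees that the second factor increases to $1$, so that $G_n(x,y)\uparrow G_r(x,y)\coloneqq\scale(r)-\scale(x\vee y)$ for every $y\in(0,r)$ (with the convention $G_n(x,y)\coloneqq 0$ for $y\leq 1/n$). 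Applying the monotone convergence theorem to both sides of the displayed identity with $g=|f|$ then gives
\begin{equation*}
\E_x\biggl(\int_0^{\sigma_r}|f(X_t)|\,\d t\biggr)=\int_0^r\bigl(\scale(r)-\scale(x\vee y)\bigr)\,|f(y)|\,\speed(\d y).
\end{equation*}
Once the right-hand side is shown to be finite, the dominated convergence theorem upgrades the identity to $f$ in place of $|f|$, and splitting the integral according to whether $y\leq x$ (so that $x\vee y=x$) or $y>x$ (so that $x\vee y=y$) produces the asserted expression.

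It remains to bound the last integral. Since $\scale$ is increasing, $0\leq\scale(r)-\scale(x\vee y)\leq\scale(r)-\scale(x)$ for every $y\in(0,r]$, so $G_r(x,\cdot)$ is bounded on $(0,r]$; the hypotheses that $f$ is continuous on $(0,\infty)$ and $|f(0+)|<\infty$ make $f$ bounded on $(0,r]$; and $\speed\bigl((0,r]\bigr)<\infty$, because $\speed\bigl((0,1]\bigr)<\infty$ by the third condition in Assumption~\eqref{eqsec1:Assump}, while on $[1,r]$ the measure $\speed$ has the Lebesgue density $2/\bigl(a^2\scale'\bigr)$, which is locally integrable by \eqref{eqsec1:LocInt} and the continuity and positivity of $\scale'$. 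Consequently the right-hand side is at most $\bigl(\scale(r)-\scale(x)\bigr)\,\speed\bigl((0,r]\bigr)\,\sup_{y\in(0,r]}|f(y)|<\infty$, which also establishes the ``in particular'' assertion.

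The only genuinely delicate point is the passage to the limit at the inaccessible boundary $0$: one must verify that $\tau_{1/n}\wedge\sigma_r$ increases to $\sigma_r$ itself (rather than to a strictly smaller random time), that the Green's functions $G_n$ increase to $G_r$ monotonically so that monotone convergence is legitimate on both sides, and that the limiting integral is absolutely convergent, so that both the split into two integrals and the dominated-convergence step are justified. Everything else reduces to a direct appeal to the one-dimensional diffusion Green's function together with the integrability estimates above.
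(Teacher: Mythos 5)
Your proposal is correct and follows essentially the same route as the paper's proof: truncate at the exit time from $(1/n,r)$, invoke the classical Green's-function formula of \citet{KT81}, Section~15.3, and pass to the limit $n\uparrow\infty$ using $\scale(0+)=-\infty$, the bound $\scale(r)-\scale(x\vee y)\leq\scale(r)-\scale(x)$, the boundedness of $f$ on $(0,r]$, and $\speed\bigl((0,r]\bigr)<\infty$. The only cosmetic difference is that you use monotone convergence for $|f|$ followed by dominated convergence for $f$, whereas the paper applies dominated convergence directly with the same uniform bound and Fatou's lemma for the integrability claim.
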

\begin{proof}
If $x\geq r>0$ then $\sigma_r=0$ $\P_x$-a.s., and the result follows immediately. Suppose, therefore, that $0<x<r$, and set $K\coloneqq\sup_{y\in(0,r]}|f(y)|<\infty$. Then, for each $n\in\N$, with $1/n<x<r<n$, we have
\begin{equation*}
0\leq\indicator{\{y>\frac{1}{n}\}}\frac{\bigl(\scale(r)-\scale(x)\bigr)\bigl(\scale(y)-\scale(1/n)\bigr)}{\scale(r)-\scale(1/n)}|f(y)|
\leq K\bigl(\scale(r)-\scale(x)\bigr),
\end{equation*}
for all $y\in(0,x]$, by virtue of the fact that the scale function is monotonically increasing, while
\begin{equation*}
0\leq\frac{\bigl(\scale(x)-\scale(1/n)\bigr)\bigl(\scale(r)-\scale(y)\bigr)}{\scale(r)-\scale(1/n)}|f(y)|
\leq K\bigl(\scale(r)-\scale(y)\bigr),
\end{equation*}
for all $y\in[x,r]$, for the same reason. From \citet{KT81}, Remark~15.3.3, we then obtain
\begin{equation}
\label{eqlemsec3:ExpIntRep}
\begin{split}
\E_x\biggl(\int_0^{\sigma_r\wedge\zeta_n}|f(X_t)|\,\d t\biggr)
&=\int_{1/n}^x\frac{\bigl(\scale(r)-\scale(x)\bigr)\bigl(\scale(y)-\scale(1/n)\bigr)}{\scale(r)-\scale(1/n)}|f(y)|\,\speed(\d y)\\
&\qquad\qquad\qquad+\int_x^r\frac{\bigl(\scale(x)-\scale(1/n)\bigr)\bigl(\scale(r)-\scale(y)\bigr)}{\scale(r)-\scale(1/n)}|f(y)|\,\speed(\d y)\\
&\leq\int_0^xK\bigl(\scale(r)-\scale(x)\bigr)\,\speed(\d y)
+\int_x^rK\bigl(\scale(r)-\scale(y)\bigr)\,\speed(\d y)\\
&\leq K\bigl(\scale(r)-\scale(x)\bigr)\int_0^r\speed(\d y)<\infty,
\end{split}
\end{equation}
for all $x>0$ and each $n\in\N$, such that $1/n<x<r<n$, by the monotonicity of the scale function and the third condition in Assumption~\eqref{eqsec1:Assump}. Next, successive applications of the dominated convergence theorem, combined with \citet{KT81}, Remark~15.3.3, yield
\begin{equation*}
\begin{split}
\E_x\biggl(\int_0^{\sigma_r}f(X_t)\,\d t\biggr)
&=\lim_{n\uparrow\infty}\E_x\biggl(\int_0^{\sigma_r\wedge\zeta_n}f(X_t)\,\d t\biggr)\\
&=\lim_{n\uparrow\infty}\biggl(\int_{1/n}^x\frac{\bigl(\scale(r)-\scale(x)\bigr)\bigl(\scale(y)-\scale(1/n)\bigr)}{\scale(r)-\scale(1/n)}f(y)\,\speed(\d y)\\
&\qquad\qquad\qquad+\int_x^r\frac{\bigl(\scale(x)-\scale(1/n)\bigr)\bigl(\scale(r)-\scale(y)\bigr)}{\scale(r)-\scale(1/n)}f(y)\,\speed(\d y)\biggr)\\
&=\int_0^x\lim_{n\uparrow\infty}\indicator{\left\{y>\frac{1}{n}\right\}}\frac{\bigl(\scale(r)-\scale(x)\bigr)\bigl(\scale(y)-\scale(1/n)\bigr)}{\scale(r)-\scale(1/n)}f(y)\,\speed(\d y)\\
&\qquad\qquad\qquad+\int_x^r\lim_{n\uparrow\infty}\frac{\bigl(\scale(x)-\scale(1/n)\bigr)\bigl(\scale(r)-\scale(y)\bigr)}{\scale(r)-\scale(1/n)}f(y)\,\speed(\d y)\\
&=\int_0^x\bigl(\scale(r)-\scale(x)\bigr)f(y)\,\speed(\d y)+\int_x^r\bigl(\scale(r)-\scale(y)\bigr)f(y)\,\speed(\d y),
\end{split}
\end{equation*}
for all $x>0$, due to the monotonicity of the scale function and the first condition in Assumption~\eqref{eqsec1:Assump}. Finally, note that combining Fatou's lemma with \eqref{eqlemsec3:ExpIntRep}, and the third condition in Assumption~\eqref{eqsec1:Assump}, gives
\begin{equation*}
\E_x\biggl(\int_0^{\sigma_r}|f(X_t)|\,\d t\biggr)
\leq\liminf_{n\uparrow\infty}\E_x\biggl(\int_0^{\sigma_r\wedge\zeta_n}|f(X_t)|\,\d t\biggr)
\leq K\bigl(\scale(r)-\scale(x)\bigr)\int_0^r\speed(\d y)<\infty,
\end{equation*}
for all $x>0$.
\end{proof}

\begin{remark}
Suppose, once again, that $X$ is a Bessel process of dimension three. Its scale function and speed measure are given by
\begin{equation*}
\scale(x)\coloneqq-\frac{1}{x}\qquad\text{and}\qquad\speed(\d x)\coloneqq 2x^2\,\d x,
\end{equation*}
for all $x>0$ (see Example~\ref{exsec5:Bes}). According to Lemma~\ref{lemsec3:ExpIntRep},
\begin{equation*}
\E_x(\sigma_r)
=\int_0^x\biggl(-\frac{1}{r}+\frac{1}{x}\biggr)2y^2\,\d y+\int_x^r\biggl(-\frac{1}{r}+\frac{1}{y}\biggr)2y^2\,\d y
=\frac{1}{3}(r^2-x^2),
\end{equation*}
for all $x>0$ and $r>0$, with $x\leq r$. Consequently, $\lim_{x\downarrow 0}\E_x(\sigma_r)=r^2/3$, for all $r>0$, which is consistent with Lemma~\ref{lemsec2:MeanFPTBes3D0}.
\end{remark}

We now focus on the optimal stopping problem
\begin{equation}
\label{eqsec3:OptPredProb4}
\widetilde{V}(x)\coloneqq\inf_{r>0}\E_x\biggl(\int_0^{\sigma_r}c(X_t)\,\d t\biggr),
\end{equation}
for all $x>0$. Lemma~\ref{lemsec3:ExpIntRep} ensures that the admissible stopping times for Problem~\eqref{eqsec3:OptPredProb4} are also admissible for Problem~\eqref{eqpropsec2:OptPredProb3_1}, from which it follows that $\widetilde{V}(x)\geq\widehat{V}(x)$, for all $x>0$. In the light of Lemma~\ref{lemsec3:ExpIntRep}, we also observe that
\begin{equation*}
\frac{\partial}{\partial r}\E_x\biggl(\int_0^{\sigma_r}c(X_t)\,\d t\biggr)
=\scale'(r)\int_0^rc(y)\,\speed(\d y),
\end{equation*}
for all $x>0$ and all $r>0$. This suggests the following solution for Problem~\eqref{eqsec3:OptPredProb4}:
\begin{equation}
\label{eqsec3:ValFunc1}
\widetilde{V}(x)=
\begin{dcases}
-\int_0^x\scale(x)c(y)\,\speed(\d y)-\int_x^{r_*}\scale(y)c(y)\,\speed(\d y),&\text{if $x\leq r_*$};\\
0&\text{if $x\geq r_*$},
\end{dcases}
\end{equation}
where the optimal stopping boundary $r_*>0$ is implicitly determined by the first-order equation
\begin{equation}
\label{eqsec3:OptStopBnd}
\int_0^{r_*}c(y)\,\speed(\d y)=0.
\end{equation}
In fact, since $c(x)\leq 0$, for all $x\in\bigl(0,\scale^{-1}\bigl(\scale(z)/2\bigr)\bigr]$, the inequalities $r_*>\scale^{-1}\bigl(\scale(z)/2\bigr)>z$ follow from \eqref{eqsec3:OptStopBnd} (see Figure~\ref{figsec2:Integrand}).
\section{Solving the General Version of the Problem}
\label{sec4}
This section establishes the correspondence between the function $\widetilde{V}$, given by \eqref{eqsec3:ValFunc1}, and the value function $\widehat{V}$ for the general optimal stopping problem \eqref{eqpropsec2:OptPredProb3_1}. The next result takes the first step in that direction.

\begin{proposition}
\label{propsec4:FreeBoundProb}
If the origin is a natural boundary, then the function $\widetilde{V}$, determined by \eqref{eqsec3:ValFunc1}, solves the following boundary-value problem:
\begin{subequations}
\label{eqpropsec4:FreeBoundProb1}
\begin{align}
\frac{1}{2}a^2(x)\widetilde{V}''(x)+b(x)\widetilde{V}'(x)&=-c(x);\label{eqpropsec4:FreeBoundProb1a}\\
\widetilde{V}(r_*-)&=0;\label{eqpropsec4:FreeBoundProb1b}\\
\widetilde{V}'(r_*-)&=0;\label{eqpropsec4:FreeBoundProb1c}\\
\widetilde{V}(0+)&=-\infty,\label{eqpropsec4:FreeBoundProb1d}
\end{align}
\end{subequations}
for all $x\in(0,r_*)$. Alternatively, if the origin is an entrance boundary, then the lower boundary condition \eqref{eqpropsec4:FreeBoundProb1d} is replaced with $\widetilde{V}(0+)>-\infty$.
\end{proposition}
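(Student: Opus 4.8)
The plan is to check the four requirements directly against the closed-form expression \eqref{eqsec3:ValFunc1}, exploiting the fact that on the open interval $(0,r_*)$ the operator $g\mapsto\tfrac12 a^2 g''+bg'$ coincides with the scale--speed operator $\frac{\d}{\d\speed}\frac{\d}{\d\scale}g$. This identity follows directly from \eqref{eqsec1:ScaleSpeed1}: there $\scale'(x)=\exp\bigl(-\int^x\tfrac{2b(\xi)}{a^2(\xi)}\,\d\xi\bigr)$, so $\scale''=-\tfrac{2b}{a^2}\scale'$, while $\speed(\d x)=\tfrac{2}{a^2(x)\scale'(x)}\,\d x$; combining these gives $\frac{\d}{\d\speed}\bigl(\tfrac{g'}{\scale'}\bigr)=\tfrac{a^2\scale'}{2}\tfrac{\d}{\d x}\bigl(\tfrac{g'}{\scale'}\bigr)=\tfrac12 a^2 g''+bg'$. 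The local integrability condition \eqref{eqsec1:LocInt} moreover makes $\scale$ continuously differentiable with $\scale'$ locally bounded away from $0$, and endows $\speed$ with a locally integrable Lebesgue density; these facts legitimise the differentiations below.

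The first step is to differentiate the non-trivial branch of \eqref{eqsec3:ValFunc1}. Writing it as $\widetilde V(x)=-\scale(x)\int_0^x c(y)\,\speed(\d y)-\int_x^{r_*}\scale(y)c(y)\,\speed(\d y)$ and applying the product rule, the two boundary terms produced by the $\speed$-integrals cancel, leaving
\[
\widetilde V'(x)=-\scale'(x)\int_0^x c(y)\,\speed(\d y),\qquad x\in(0,r_*).
\]
Hence $\widetilde V'/\scale'=-\int_0^{\cdot}c\,\speed$, so that $\frac{\d}{\d\speed}\bigl(\widetilde V'/\scale'\bigr)=-c$, which by the identity of the preceding paragraph is precisely \eqref{eqpropsec4:FreeBoundProb1a}. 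Letting $x\uparrow r_*$ in the formula for $\widetilde V$ and in the last display, and invoking the defining relation \eqref{eqsec3:OptStopBnd}, i.e.\ $\int_0^{r_*}c(y)\,\speed(\d y)=0$, annihilates the surviving terms, so $\widetilde V(r_*-)=0$ and $\widetilde V'(r_*-)=0$; these are \eqref{eqpropsec4:FreeBoundProb1b} and \eqref{eqpropsec4:FreeBoundProb1c} (the latter expressing the smooth fit with the branch $\widetilde V\equiv 0$ on $[r_*,\infty)$).

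It remains to identify $\widetilde V(0+)$. Since $c\equiv-1$ on $(0,z]$ (see Figure~\ref{figsec2:Integrand}) and $z<r_*$, for $0<x<z$ the expression \eqref{eqsec3:ValFunc1} becomes
\[
\widetilde V(x)=\scale(x)\,\speed\bigl((0,x]\bigr)-\int_x^{z}|\scale(y)|\,\speed(\d y)-\int_z^{r_*}\scale(y)c(y)\,\speed(\d y),
\]
where the last integral is a finite constant, the middle term is nonpositive, and the first satisfies $0\le|\scale(x)|\,\speed\bigl((0,x]\bigr)\le\int_0^x|\scale(y)|\,\speed(\d y)$ because $|\scale|$ is nonincreasing. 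The dichotomy now enters through Feller's boundary classification: the first and third conditions of \eqref{eqsec1:Assump} already force the origin to be either natural or entrance, and these two cases are distinguished precisely by $\int_0|\scale(y)|\,\speed(\d y)=\infty$ versus $\int_0|\scale(y)|\,\speed(\d y)<\infty$ \citep[see][Table~15.6.2]{KT81}. In the natural case the middle term decreases to $-\infty$ while the first term stays $\le 0$, so $\widetilde V(0+)=-\infty$; in the entrance case the middle term tends to a finite limit, while the displayed bound forces $\scale(x)\,\speed\bigl((0,x]\bigr)\to 0$, so $\widetilde V(0+)>-\infty$.

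I expect the only genuine obstacle to be this final step --- pinning down the equivalence between the natural/entrance alternative and the finiteness of $\int_0|\scale|\,\speed$, and, in the entrance case, controlling the indeterminate product $\scale(x)\,\speed((0,x])$. Everything else amounts to routine bookkeeping with the explicit formula \eqref{eqsec3:ValFunc1}.
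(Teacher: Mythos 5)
Your proposal is correct and follows essentially the same route as the paper: differentiate the closed form \eqref{eqsec3:ValFunc1}, verify the ODE via the scale function's harmonicity (your $\frac{\d}{\d\speed}\frac{\d}{\d\scale}$ formulation is just the paper's explicit computation of $\widetilde{V}''$ together with $\tfrac12 a^2\scale''+b\scale'=0$), obtain the conditions at $r_*$ from \eqref{eqsec3:OptStopBnd}, and settle the behaviour at the origin by reducing it to the finiteness of $\int_0|\scale(y)|\,\speed(\d y)$ and citing the Feller classification in \citet{KT81}, Table~15.6.2. Your three-term decomposition near the origin is in fact slightly more careful than the paper's one-line reduction, but the substance is identical.
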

\begin{proof}
Fix $x\in(0,r_*)$. Differentiating \eqref{eqsec3:ValFunc1} yields
\begin{equation}
\label{eqpropsec4:FreeBoundProb2}
\widetilde{V}'(x)=-\scale'(x)\int_0^xc(y)\,\speed(\d y)\qquad\text{and}\qquad
\widetilde{V}''(x)=-\scale''(x)\int_0^xc(y)\,\speed(\d y)-\frac{2}{a^2(x)}c(x).
\end{equation}
Consequently,
\begin{equation*}
\frac{1}{2}a^2(x)\widetilde{V}''(x)+b(x)\widetilde{V}'(x)
=-\biggl(\frac{1}{2}a^2(x)\scale''(x)+b(x)\scale'(x)\biggr)\int_0^xc(y)\,\speed(\d y)-c(x)
=-c(x),
\end{equation*}
since the scale function satisfies the ODE
\begin{equation*}
\frac{1}{2}a^2(x)\scale''(x)+b(x)\scale'(x)=0.
\end{equation*}
Next, $\widetilde{V}(r_*-)=0$, by inspection of \eqref{eqsec3:ValFunc1}, while $\widetilde{V}'(r_*-)=0$, by virtue of \eqref{eqsec3:OptStopBnd}. Finally, it follows from \eqref{eqpropsec2:OptPredProb3_2} and \eqref{eqsec3:ValFunc1} that
\begin{equation*}
\widetilde{V}(0+)=\lim_{x\downarrow 0}\int_0^x\scale(y)\,\speed(\d y).
\end{equation*}
Consequently, $\widetilde{V}(0+)=-\infty$ if the origin is a natural boundary, while $\widetilde{V}(0+)>-\infty$ if the origin is an entrance boundary, according to \citet{KT81}, Table~15.6.2.
\end{proof}

There is, of course, a well-established correspondence between optimal stopping problems and free-boundary problems. Seen in that light, \eqref{eqpropsec4:FreeBoundProb1} will turn out to be the free-boundary problem associated with the optimal stopping problem \eqref{eqpropsec2:OptPredProb3_1}. Next, we show that the function $\widetilde{V}$, given by \eqref{eqsec3:ValFunc1}, is non-positive:

\begin{lemma}
\label{lemsec4:ValFuncNegative}
The function $\widetilde{V}$, determined by \eqref{eqsec3:ValFunc1}, satisfies $\widetilde{V}(x)\leq 0$, for all $x>0$.
\end{lemma}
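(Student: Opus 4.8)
The plan is to exploit the monotonicity of $\widetilde V$ on $(0,r_*)$. For $x\ge r_*$ the inequality is immediate, since $\widetilde V(x)=0$ by \eqref{eqsec3:ValFunc1}, so everything reduces to the interval $(0,r_*)$.

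First I would record the derivative of $\widetilde V$ there. Differentiating \eqref{eqsec3:ValFunc1} exactly as in the first display of \eqref{eqpropsec4:FreeBoundProb2} gives
\[
\widetilde V'(x)=-\scale'(x)\int_0^x c(y)\,\speed(\d y),\qquad x\in(0,r_*),
\]
and, letting $x\uparrow r_*$ in \eqref{eqsec3:ValFunc1} and using $\scale(r_*)\int_0^{r_*}c(y)\,\speed(\d y)=0$ (which is \eqref{eqsec3:OptStopBnd}), one sees that $\widetilde V$ extends continuously to $r_*$ with $\widetilde V(r_*-)=0$, consistent with the upper branch of \eqref{eqsec3:ValFunc1}.

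The key step is to verify that $\int_0^x c(y)\,\speed(\d y)\le 0$ for every $x\in(0,r_*)$. Writing $\rho\coloneqq\scale^{-1}\bigl(\scale(z)/2\bigr)$ for the unique root of $c$, recall that $c<0$ on $(0,\rho)$, $c>0$ on $(\rho,\infty)$, and $z<\rho<r_*$. If $x\le\rho$ the integrand is non-positive on $(0,x)$, so the integral is non-positive; if $\rho<x<r_*$, then by \eqref{eqsec3:OptStopBnd},
\[
\int_0^x c(y)\,\speed(\d y)=\int_0^{r_*}c(y)\,\speed(\d y)-\int_x^{r_*}c(y)\,\speed(\d y)=-\int_x^{r_*}c(y)\,\speed(\d y)\le 0,
\]
since $c\ge 0$ on $[x,r_*]\subset[\rho,\infty)$. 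As $\scale'>0$, this yields $\widetilde V'\ge 0$ on $(0,r_*)$; together with $\widetilde V(r_*-)=0$ and the fundamental theorem of calculus this gives $\widetilde V(x)=-\int_x^{r_*}\widetilde V'(u)\,\d u\le 0$ for all $x\in(0,r_*)$, which finishes the proof.

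I do not expect a genuine obstacle: the whole argument hinges on the sign of $\int_0^x c\,\speed$, which is pinned down by the single sign change of $c$ together with the defining equation \eqref{eqsec3:OptStopBnd} for $r_*$; the only points requiring a little care are the limit $\widetilde V(r_*-)=0$ and the (harmless) fact that $\widetilde V'$ is bounded on compact subintervals of $(0,r_*]$, so the integral representation of $\widetilde V$ is legitimate. An essentially equivalent route, should it be preferred, is to write $\widetilde V(x)=\E_x\bigl(\int_0^{\sigma_{r_*}}c(X_t)\,\d t\bigr)$ via Lemma~\ref{lemsec3:ExpIntRep} and \eqref{eqsec3:OptStopBnd}, and then use $\partial_r\E_x\bigl(\int_0^{\sigma_r}c(X_t)\,\d t\bigr)=\scale'(r)\int_0^r c(y)\,\speed(\d y)$ to conclude that, for $x<r_*$, the map $r\mapsto\E_x\bigl(\int_0^{\sigma_r}c(X_t)\,\d t\bigr)$ is non-increasing on $[x,r_*]$ and hence bounded above by its value $0$ at $r=x$ (with the case $x\ge r_*$ being trivial).
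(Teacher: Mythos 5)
Your proof is correct, but it organises the argument differently from the paper, mainly in how the region between the root $\rho\coloneqq\scale^{-1}\bigl(\scale(z)/2\bigr)$ of $c$ and $r_*$ is treated. The paper splits $(0,r_*]$ at $\rho$: on $[\rho,r_*]$ it bounds $\widetilde{V}(x)$ pointwise from \eqref{eqsec3:ValFunc1}, replacing $\scale(y)$ by $\scale(x)$ in the second integral (legitimate since $c\geq 0$ there and $\scale$ is increasing) and then invoking \eqref{eqsec3:OptStopBnd}; on $(0,\rho]$ it argues, as you do, that $\widetilde{V}'\geq 0$ and propagates the bound back from $\rho$. You instead prove the single inequality $\int_0^x c(y)\,\speed(\d y)\leq 0$ for every $x\in(0,r_*)$ --- trivially for $x\leq\rho$, and for $\rho<x<r_*$ by writing $\int_0^x c(y)\,\speed(\d y)=-\int_x^{r_*}c(y)\,\speed(\d y)\leq 0$ via \eqref{eqsec3:OptStopBnd} --- so that $\widetilde{V}'\geq 0$ on all of $(0,r_*)$, and then integrate back from the terminal value $\widetilde{V}(r_*-)=0$. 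Both proofs rest on the same three ingredients (the single sign change of $c$, the derivative formula \eqref{eqpropsec4:FreeBoundProb2}, and the first-order condition \eqref{eqsec3:OptStopBnd}); yours is more unified and yields the extra information that $\widetilde{V}$ is nondecreasing on the whole continuation set, while the paper's case-(ii) estimate is purely pointwise and dispenses with the (mild) appeal to the fundamental theorem of calculus near $r_*$, which you correctly flag and which is harmless here because $\widetilde{V}'$ is continuous on $(0,r_*]$ with $\widetilde{V}'(r_*-)=0$. Your probabilistic alternative via Lemma~\ref{lemsec3:ExpIntRep} is also sound, and is essentially the first-order heuristic the paper itself uses in Section~\ref{sec3} to identify $r_*$.
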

\begin{proof}
Fix $x>0$, and consider the following three cases:
\vspace{2mm}\newline\noindent
(i)~Suppose $x\geq r_*$. Then $\widetilde{V}(x)=0$ follows trivially from \eqref{eqsec3:ValFunc1}.
\vspace{2mm}\newline\noindent
(ii)~Suppose $\scale^{-1}\bigl(\scale(z)/2\bigr)\leq x\leq r_*$. Then $c(y)\geq 0$, for all $y\in[x,r_*]$, whence
\begin{equation*}
\widetilde{V}(x)=-\int_0^x\scale(x)c(y)\,\speed(\d y)-\int_x^{r_*}\scale(y)c(y)\,\speed(\d y)
\leq-\scale(x)\int_0^{r_*}c(y)\,\speed(\d y)=0,
\end{equation*}
by virtue of \eqref{eqsec3:ValFunc1}, the monotonicity of the scale function, and \eqref{eqsec3:OptStopBnd}.
\vspace{2mm}\newline\noindent
(iii)~Suppose $0<x\leq\scale^{-1}\bigl(\scale(z)/2\bigr)$. Then $c(y)\leq 0$, for all $y\in(0,x]$, whence
\begin{equation*}
\widetilde{V}'(x)=-\scale'(x)\int_0^xc(y)\,\speed(\d y)\geq 0,
\end{equation*}
due to the fact that the scale function is monotonically increasing. Combining this observation with the fact that $\widetilde{V}\bigl(\scale^{-1}\bigl(\scale(z)/2\bigr)\bigr)\leq 0$, by virtue of (ii) above, we get $\widetilde{V}(x)\leq 0$.
\end{proof}

We are now in a position to prove the verification theorem that establishes the correspondence between the function $\widetilde{V}$ given by \eqref{eqsec3:ValFunc1} and the value function $\widehat{V}$ associated with the general optimal stopping problem \eqref{eqpropsec2:OptPredProb3_1}. Since $\widetilde{V}$ is the value function for the restricted problem \eqref{eqsec3:OptPredProb4}, we conclude that these two problems are equivalent. Moreover, the optimal stopping policy for Problem~\eqref{eqpropsec2:OptPredProb3_1} is given by $\tau_*=\sigma_{r_*}$, where $r_*$ is determined by \eqref{eqsec3:OptStopBnd}:

\begin{theorem}
The function $\widetilde{V}$, given by \eqref{eqsec3:ValFunc1}, corresponds with the value function $\widehat{V}$ associated with Problem~\eqref{eqpropsec2:OptPredProb3_1}. In other words, $\widetilde{V}(x)=\widehat{V}(x)$, for all $x>0$, and the optimal stopping policy for Problem~\eqref{eqpropsec2:OptPredProb3_1} is given by $\tau_*=\sigma_{r_*}$, where $r_*$ is the solution for \eqref{eqsec3:OptStopBnd}.
\end{theorem}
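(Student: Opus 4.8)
The plan is a verification argument based on the two inequalities $\widehat V\le\widetilde V$ and $\widehat V\ge\widetilde V$, the first of which is essentially already available. Indeed, $\sigma_{r_*}$ is integrable by Lemma~\ref{lemsec3:ExpIntRep} (applied with $f\equiv 1$), hence admissible for Problem~\eqref{eqpropsec2:OptPredProb3_1}, and applying Lemma~\ref{lemsec3:ExpIntRep} with $f=c$ together with the first-order condition~\eqref{eqsec3:OptStopBnd} shows $\E_x\bigl(\int_0^{\sigma_{r_*}}c(X_t)\,\d t\bigr)=\widetilde V(x)$ --- this is precisely the computation that produced \eqref{eqsec3:ValFunc1}. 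Hence $\widehat V(x)\le\widetilde V(x)$ for all $x>0$.

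For the reverse inequality I would verify that $\widetilde V$ is a non-positive supersolution of the obstacle problem associated with \eqref{eqpropsec2:OptPredProb3_1}. On $(0,r_*)$ one has $\tfrac12 a^2\widetilde V''+b\widetilde V'=-c$ by Proposition~\ref{propsec4:FreeBoundProb} (equivalently, by the computation \eqref{eqpropsec4:FreeBoundProb2} and the scale-function ODE), while on $(r_*,\infty)$ we have $\widetilde V\equiv 0$ and $c\ge 0$, since $c$ is increasing with its unique root strictly below $r_*$, as remarked after \eqref{eqsec3:OptStopBnd}; therefore
\begin{equation*}
\tfrac12 a^2(x)\widetilde V''(x)+b(x)\widetilde V'(x)+c(x)\ge 0\qquad\text{for Lebesgue-a.e.\ }x>0,
\end{equation*}
and $\widetilde V\le 0$ by Lemma~\ref{lemsec4:ValFuncNegative}. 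Note that \eqref{eqsec3:OptStopBnd} also makes $\widetilde V$ and $\widetilde V'$ vanish at $r_*-$, so $\widetilde V$ is $C^1$ across $r_*$. Applying It\^o's formula (equivalently Dynkin's formula) to $\widetilde V(X_t)$ on $(0,r_*)$, using that $\widetilde V\equiv 0$ on $[r_*,\infty)$, and localising by $\rho_n\coloneqq\zeta_n$ --- on which $X$ stays in the compact interval $[1/n,n]$ and, in the natural scale $\scale(X)$, is a bounded local martingale, so that the martingale part of the decomposition is a genuine martingale --- gives, for every admissible stopping time $\tau$,
\begin{equation*}
\E_x\bigl(\widetilde V(X_{\tau\wedge\rho_n})\bigr)=\widetilde V(x)+\E_x\biggl(\int_0^{\tau\wedge\rho_n}\bigl(\tfrac12 a^2\widetilde V''+b\widetilde V'\bigr)(X_s)\,\d s\biggr)\ge\widetilde V(x)-\E_x\biggl(\int_0^{\tau\wedge\rho_n}c(X_s)\,\d s\biggr).
\end{equation*}

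Since $\widetilde V(X_{\tau\wedge\rho_n})\le 0$ by Lemma~\ref{lemsec4:ValFuncNegative}, the left-hand side may be discarded, leaving $\E_x\bigl(\int_0^{\tau\wedge\rho_n}c(X_s)\,\d s\bigr)\ge\widetilde V(x)$; and because $|c|\le 1$ the integrand is dominated by the integrable random variable $\tau$ while $\tau\wedge\rho_n\uparrow\tau$ a.s., so dominated convergence yields $\E_x\bigl(\int_0^{\tau}c(X_s)\,\d s\bigr)\ge\widetilde V(x)$. Taking the infimum over admissible $\tau$ gives $\widehat V(x)\ge\widetilde V(x)$, hence $\widehat V=\widetilde V$ on $(0,\infty)$; combined with the first step's equality $\E_x\bigl(\int_0^{\sigma_{r_*}}c(X_s)\,\d s\bigr)=\widetilde V(x)$, this exhibits $\tau_*=\sigma_{r_*}$ as optimal. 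The same chain of inequalities also squeezes the value of the restricted problem~\eqref{eqsec3:OptPredProb4} between $\widehat V$ and $\E_x\bigl(\int_0^{\sigma_{r_*}}c\bigr)$, which now coincide, so \eqref{eqsec3:ValFunc1} is simultaneously the value of the restricted and of the general problem.

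I expect the main obstacle to be making the It\^o/Dynkin step fully rigorous: because $a$ and $b$ are only locally integrable, $\widetilde V$ is not classically $C^2$, so one must invoke the generalised It\^o formula --- or, equivalently, the speed-and-scale characterisation of the generator, under which $\widetilde V$ lies in its domain on $(0,r_*)$ with $\tfrac12 a^2\widetilde V''+b\widetilde V'=-c$, exactly as in Proposition~\ref{propsec4:FreeBoundProb} and in the Green's-function computation behind Lemma~\ref{lemsec3:ExpIntRep} --- and control the martingale part via the boundedness of $\scale(X)$ on $[0,\zeta_n]$ rather than via crude bounds on $a$. The remaining ingredients --- the localisation near the boundary $0$ (where $\widetilde V$ may diverge in the natural-boundary case) and near explosion, and the passage to the limit --- are routine given the uniform bound $|c|\le 1$, the integrability of $\tau$, and the sign condition $\widetilde V\le 0$, which is exactly what permits discarding the terminal term $\E_x(\widetilde V(X_{\tau\wedge\rho_n}))$ rather than tracking its limit.
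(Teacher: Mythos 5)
Your proof is correct and takes essentially the same route as the paper: the inequality $\widehat V\le\widetilde V$ comes from the admissibility of $\sigma_{r_*}$ and Lemma~\ref{lemsec3:ExpIntRep}, while the reverse inequality is the same verification argument the paper carries out with Peskir's local time-space formula --- the ODE \eqref{eqpropsec4:FreeBoundProb1a} on $(0,r_*)$, the $C^1$-fit at $r_*$, $\widetilde V\equiv 0$ and $c\ge 0$ above $r_*$, Lemma~\ref{lemsec4:ValFuncNegative} to discard the terminal term, localisation, optional sampling, and dominated convergence via $|c|\le 1$ and the integrability of $\tau$. The only detail to adjust is the localisation $\rho_n=\zeta_n$: since $\zeta_n\uparrow\zeta$ and $\zeta$ may be finite (Example~\ref{exsec5:ExplProc}), $\tau\wedge\rho_n$ increases to $\tau\wedge\zeta$ rather than to $\tau$, so you should either note that the killed process contributes a non-negative integrand on $[\zeta,\tau]$ (the payoff integrand equals $+1$ after explosion, consistently with $c(\infty-)=1$), or localise the stochastic integral $\int_0^\cdot a(X_s)\widetilde V'(X_s)\,\d B_s$ directly, as the paper does, noting that its integrand vanishes once $X$ exceeds $r_*$.
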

\begin{proof}
By inspecting \eqref{eqsec3:ValFunc1} and \eqref{eqpropsec4:FreeBoundProb2}, we see that $\widetilde{V}\notin\C{2}{0}(0,\infty)$, since
\begin{equation*}
\widetilde{V}''(r_*-)=-\frac{2}{a^2(r_*)}c(r_*)<0=\widetilde{V}''(r_*+).
\end{equation*}
Consequently, It\^o's formula cannot be applied in its conventional form to $\widetilde{V}(X)$. However, since $\widetilde{V}\in\C{2}{0}(0,r_*]\cap\C{2}{0}[r_*,\infty)$, we may apply the local time-space formula of \citet{Pes05a} to get
\begin{equation*}
\begin{split}
\widetilde{V}(X_t)&=\widetilde{V}(X_0)+\int_0^t\frac{1}{2}\Bigl(\widetilde{V}'(X_s+)+\widetilde{V}'(X_s-)\Bigr)\,\d X_s+\frac{1}{2}\int_0^t\indicator{\{X_s\neq r_*\}}\widetilde{V}''(X_s)\,\d\<X\>_s\\
&\qquad\qquad\qquad\qquad\qquad\qquad\qquad\qquad+\frac{1}{2}\int_0^t\indicator{\{X_s=r_*\}}\Bigl(\widetilde{V}'(X_s+)-\widetilde{V}'(X_s-)\Bigr)\,\d\ell^{r_*}_s(X),
\end{split}
\end{equation*}
for all $t\geq 0$, where $\ell^{r_*}(X)$ denotes the local time process of $X$ at $r_*$ \citep[see e.g.][Section~3.5]{PS06}. Next, inspection of \eqref{eqsec3:ValFunc1} and \eqref{eqpropsec4:FreeBoundProb2} reveal that $\widetilde{V}\in\C{1}{0}(0,\infty)$ and $\widetilde{V}(x)=\widetilde{V}'(x)=\widetilde{V}''(x)=0$, for all $x>r_*$. Consequently,
\begin{align*}
\widetilde{V}(X_t)&=\widetilde{V}(X_0)+\int_0^t\indicator{\{X_s<r_*\}}\widetilde{V}'(X_s)\,\d X_s+\frac{1}{2}\int_0^t\indicator{\{X_s<r_*\}}\widetilde{V}''(X_s)\,\d\<X\>_s\\
&=\widetilde{V}(X_0)+\int_0^t\indicator{\{X_s<r_*\}}\biggl(b(X_s)\widetilde{V}'(X_s)+\frac{1}{2}a^2(X_s)\widetilde{V}''(X_s)\biggr)\,\d s\\
&\qquad\qquad\qquad\qquad\qquad\qquad\qquad\qquad\qquad\qquad\qquad+\int_0^t\indicator{\{X_s<r_*\}}a(X_s)\widetilde{V}'(X_s)\,\d B_s\\
&=\widetilde{V}(X_0)-\int_0^t\indicator{\{X_s<r_*\}}c(X_s)\,\d s+\int_0^t\indicator{\{X_s<r_*\}}a(X_s)\widetilde{V}'(X_s)\,\d B_s
\end{align*}
for all $t\geq 0$, by virtue of \eqref{eqpropsec4:FreeBoundProb1a}. Rearranging this equation produces
\begin{align*}
\widetilde{V}(X_0)&=\widetilde{V}(X_t)+\int_0^tc(X_s)\,\d s-\int_0^t\indicator{\{X_s\geq r_*\}}c(X_s)\,\d s-\int_0^t\indicator{\{X_s<r_*\}}a(X_s)\widetilde{V}'(X_s)\,\d B_s\\
&\leq\int_0^tc(X_s)\,\d s-\int_0^ta(X_s)\widetilde{V}'(X_s)\,\d B_s
\end{align*}
for all $t\geq 0$, since $\widetilde{V}(X_t)\leq 0$, according to Lemma~\ref{lemsec4:ValFuncNegative}, while $c(x)\geq 0$ and $\widetilde{V}'(x)=0$, for all $x\geq r_*$. Next, fix an arbitrary integrable stopping time $\tau$, and let $(\rho_n)_{n\in\N}$ be a localising sequence of stopping times for the local martingale $\int_0^\cdot a(X_s)\widetilde{V}'(X_s)\,\d B_s$. The optional sampling theorem then yields
\begin{equation*}
\widetilde{V}(x)\leq\E_x\biggl(\int_0^{\tau\wedge\rho_n}c(X_s)\,\d s\biggr)-\E_x\biggl(\int_0^{\tau\wedge\rho_n}a(X_s)\widetilde{V}'(X_s)\,\d B_s\biggr)
=\E_x\biggl(\int_0^{\tau\wedge\rho_n}c(X_s)\,\d s\biggr),
\end{equation*}
for all $x>0$ and each $n\in\N$. Finally, the fact that $|c(x)|\leq 1$, for all $x>0$, implies that
\begin{equation*}
\E_x\biggl(\biggl|\int_0^{\tau\wedge\rho_n}c(X_s)\,\d s\biggr|\biggr)
\leq\E_x\biggl(\int_0^{\tau\wedge\rho_n}|c(X_s)|\,\d s\biggr)
\leq\E_x(\tau\wedge\rho_n)
\leq\E_x(\tau)<\infty,
\end{equation*}
for all $x>0$ and each $n\in\N$. We may thus invoke the dominated convegence theorem to get
\begin{equation*}
\widetilde{V}(x)\leq\lim_{n\uparrow\infty}\E_x\biggl(\int_0^{\tau\wedge\rho_n}c(X_s)\,\d s\biggr)
=\E_x\biggl(\int_0^\tau c(X_s)\,\d s\biggr).
\end{equation*}
The arbitrariness of $\tau$ ensures that $\widetilde{V}(x)\leq\widehat{V}(x)$, for all $x>0$, by inspection of Problem~\eqref{eqpropsec2:OptPredProb3_1}. Since the reverse inequality has already been established, the result follows.
\end{proof}
\section{Some Examples}
\label{sec5}
In this section we present a number of examples of time-homogeneous scalar diffusions $X$ that conform to Assumption~\eqref{eqsec1:Assump}. In each case we use \eqref{eqsec3:ValFunc1} and \eqref{eqsec3:OptStopBnd} to compute the value function and optimal stopping policy for Problem~\eqref{eqpropsec2:OptPredProb3_1}. We begin by considering the class of time-homogeneous scalar diffusions whose scale functions and speed measures satisfy
\begin{equation}
\label{eqsec5:ScaleSpeed2}
\scale(x)=-\alpha x^{-\mu}\qquad\text{and}\qquad\speed(\d x)=\beta x^\nu\,\d x,
\end{equation}
for all $x>0$, where $\alpha>0$, $\beta>0$, $\mu>0$ and $\nu>-1$. In that case the conditions in Assumption~\eqref{eqsec1:Assump} are easily verified, and the first-order condition \eqref{eqsec3:OptStopBnd}, which determines the optimal stopping boundary $r_*>\scale^{-1}\bigl(\scale(z)/2\bigr)>z$, assumes the following form:
\begin{subequations}
\label{eqsec5:OSB}
\begin{equation}
\label{eqsec5:OSBa}
(\nu-\mu+1)\biggl(\frac{r_*}{z}\biggr)^{\nu+1}-2(\nu+1)\biggl(\frac{r_*}{z}\biggr)^{\nu-\mu+1}+2\mu=0,
\end{equation}
if $\nu\neq\mu-1$, and
\begin{equation}
\label{eqsec5:OSBb}
\biggl(\frac{r_*}{z}\biggr)^\mu-2\mu\ln\frac{r_*}{z}-2=0,
\end{equation}
\end{subequations}
if $\nu=\mu-1$. Moreover, the value function \eqref{eqsec3:ValFunc1} may be computed explicitly, as follows:
\begin{subequations}
\label{eqsec5:ValFunc2}
\begin{equation}
\label{eqsec5:ValFunc2a}
\widetilde{V}(x)=
\begin{dcases}
\begin{split}
&\alpha\beta z^{\nu-\mu+1}\Biggl(\frac{1}{\nu-\mu+1}\biggl(\frac{r_*}{z}\biggr)^{\nu-\mu+1}-\frac{2}{\nu-2\mu+1}\biggl(\frac{r_*}{z}\biggr)^{\nu-2\mu+1}\\
&\quad+\frac{\mu}{(\nu-\mu+1)(\nu+1)}\biggl(\frac{x}{z}\biggr)^{\nu-\mu+1}\\
&\quad+\frac{2\mu}{(\nu-2\mu+1)(\nu-\mu+1)}\Biggr),
\end{split}
&\text{if $x\leq z$};\\
\begin{split}
&\alpha\beta z^{\nu-\mu+1}\Biggl(\frac{1}{\nu-\mu+1}\biggl(\frac{r_*}{z}\biggr)^{\nu-\mu+1}-\frac{2}{\nu-2\mu+1}\biggl(\frac{r_*}{z}\biggr)^{\nu-2\mu+1}\\
&\quad-\frac{\mu}{(\nu-\mu+1)(\nu+1)}\biggl(\frac{x}{z}\biggr)^{\nu-\mu+1}\\
&\quad+\frac{2\mu}{(\nu-2\mu+1)(\nu-\mu+1)}\biggl(\frac{x}{z}\biggr)^{\nu-2\mu+1}\\
&\quad+\frac{2\mu}{(\nu-\mu+1)(\nu+1)}\biggl(\frac{x}{z}\biggr)^{-\mu}\Biggr),
\end{split}
&\text{if $z\leq x\leq r_*$};\\
0,&\text{if $x\geq r_*$},
\end{dcases}
\end{equation}
for all $x>0$, in the case when $\nu\neq\mu-1$ and $\nu\neq 2\mu-1$, while
\begin{equation}
\label{eqsec5:ValFunc2b}
\widetilde{V}(x)=
\begin{dcases}
\alpha\beta\Biggl(\frac{2}{\mu}\biggl(\frac{r_*}{z}\biggr)^{-\mu}+\ln\frac{r_*}{z}+\ln\frac{x}{z}-\frac{3}{\mu}\Biggr),
&\text{if $x\leq z$};\\
\begin{split}
&\alpha\beta\Biggl(\frac{2}{\mu}\biggl(\frac{r_*}{z}\biggr)^{-\mu}-\frac{4}{\mu}\biggl(\frac{x}{z}\biggr)^{-\mu}-2\biggl(\frac{x}{z}\biggr)^{-\mu}\ln\frac{x}{z}+\ln\frac{r_*}{z}\\
&\quad-\ln\frac{x}{z}+\frac{1}{\mu}\Biggr),
\end{split}
&\text{if $z\leq x\leq r_*$};\\
0,&\text{if $x\geq r_*$},
\end{dcases}
\end{equation}
for all $x>0$, in the case when $\nu=\mu-1$, and
\begin{equation}
\label{eqsec5:ValFunc2c}
\widetilde{V}(x)=
\begin{dcases}
\alpha\beta z^\mu\Biggl(\frac{1}{\mu}\biggl(\frac{r_*}{z}\biggr)^\mu+\frac{1}{2\mu}\biggl(\frac{x}{z}\biggr)^\mu-2\ln\frac{r_*}{z}-\frac{2}{\mu}\Biggr),
&\text{if $x\leq z$};\\
\begin{split}
&\alpha\beta z^\mu\Biggl(\frac{1}{\mu}\biggl(\frac{r_*}{z}\biggr)^\mu-\frac{1}{2\mu}\biggl(\frac{x}{z}\biggr)^\mu+\frac{1}{\mu}\biggl(\frac{x}{z}\biggr)^{-\mu}-2\ln\frac{r_*}{z}\\
&\quad+2\ln\frac{x}{z}-\frac{2}{\mu}\Biggr),
\end{split}
&\text{if $z\leq x\leq r_*$};\\
0,&\text{if $x\geq r_*$},
\end{dcases}
\end{equation}
for all $x>0$, in the case when $\nu=2\mu-1$.

We now consider a number of specific examples of time-homogeneous scalar diffusions whose scale functions and speed measures conform to the representation \eqref{eqsec5:ScaleSpeed2}. We begin with the family of transient Bessel processes:
\end{subequations}

\begin{example}
\label{exsec5:Bes}
Let $X$ be a Bessel process of dimension $\delta>2$, in which case
\begin{equation*}
b(x)=\frac{\delta-2}{2x}\qquad\text{and}\qquad a(x)=1, 
\end{equation*}
for all $x>0$. According to \eqref{eqsec1:ScaleSpeed1}, its scale function and speed measure are given by
\begin{equation*}
\scale(x)=-x^{-(\delta-2)}\qquad\text{and}\qquad\speed(\d x)=\frac{2}{\delta-2}x^{\delta-1}\,\d x,
\end{equation*}
for all $x>0$ \citep[see e.g.][Section~XI.1]{RY99}. From \eqref{eqsec5:OSBa} it follows that the equation for the optimal stopping boundary $r_*$ is
\begin{equation}
\label{eqsec5:BesOSB}
\biggl(\frac{r_*}{z}\biggr)^\delta-\delta\biggl(\frac{r_*}{z}\biggr)^2+\delta=2.
\end{equation}
Using \eqref{eqsec5:ValFunc2a}, we see that the value function is given by
\begin{subequations}
\label{eqsec5:BesValFunc}
\begin{equation}
\label{eqsec5:BesValFunca}
\widetilde{V}(x)=
\begin{dcases}
\begin{split}
&z^2\Biggl(\frac{1}{\delta-2}\biggl(\frac{r_*}{z}\biggr)^2-\frac{4}{(\delta-2)(4-\delta)}\biggl(\frac{r_*}{z}\biggr)^{4-\delta}+\frac{1}{\delta}\biggl(\frac{x}{z}\biggr)^2\\
&\quad+\frac{2}{4-\delta}\Biggr),
\end{split}
&\text{if $x\leq z$};\\
\begin{split}
&z^2\Biggl(\frac{1}{\delta-2}\biggl(\frac{r_*}{z}\biggr)^2-\frac{4}{(\delta-2)(4-\delta)}\biggl(\frac{r_*}{z}\biggr)^{4-\delta}-\frac{1}{\delta}\biggl(\frac{x}{z}\biggr)^2\\
&\quad+\frac{2}{4-\delta}\biggl(\frac{x}{z}\biggr)^{4-\delta}+\frac{2}{\delta}\biggl(\frac{x}{z}\biggr)^{2-\delta}\Biggr),
\end{split}
&\text{if $z\leq x\leq r_*$};\\
0,&\text{if $x\geq r_*$},
\end{dcases}
\end{equation}
for all $x>0$, when $\delta\neq 4$, while \eqref{eqsec5:ValFunc2c} yields
\begin{equation}
\label{eqsec5:BesValFuncb}
\widetilde{V}(x)=
\begin{dcases}
z^2\Biggl(\frac{1}{2}\biggl(\frac{r_*}{z}\biggr)^2+\frac{1}{4}\biggl(\frac{x}{z}\biggr)^2-1-2\ln\frac{r_*}{z}\Biggr),
&\text{if $x\leq z$};\\
z^2\Biggl(\frac{1}{2}\biggl(\frac{r_*}{z}\biggr)^2-\frac{1}{4}\biggl(\frac{x}{z}\biggr)^2-1+\frac{1}{2}\biggl(\frac{x}{z}\biggr)^{-2}+2\ln\frac{x}{z}-2\ln\frac{r_*}{z}\Biggr),
&\text{if $z\leq x\leq r_*$};\\
0,&\text{if $x\geq r_*$},
\end{dcases}
\end{equation}
\end{subequations}
for all $x>0$, when $\delta=4$. For the concrete example of a Bessel process of dimension three, \eqref{eqsec5:BesOSB} possesses three solutions, only one of which satisfies the admissibility criterion $r_*>z$, namely
\begin{equation*}
r_*=\biggl(1+2\cos\frac{\pi}{9}\biggr)z\approx 2.87939z.
\end{equation*}
Figure~\ref{figsec5:Bes3ValFunc} plots the associated value function \eqref{eqsec5:BesValFunca}, for the case when $z=1$.
\end{example}

\begin{figure}[h]
\centering
\includegraphics[scale=1]{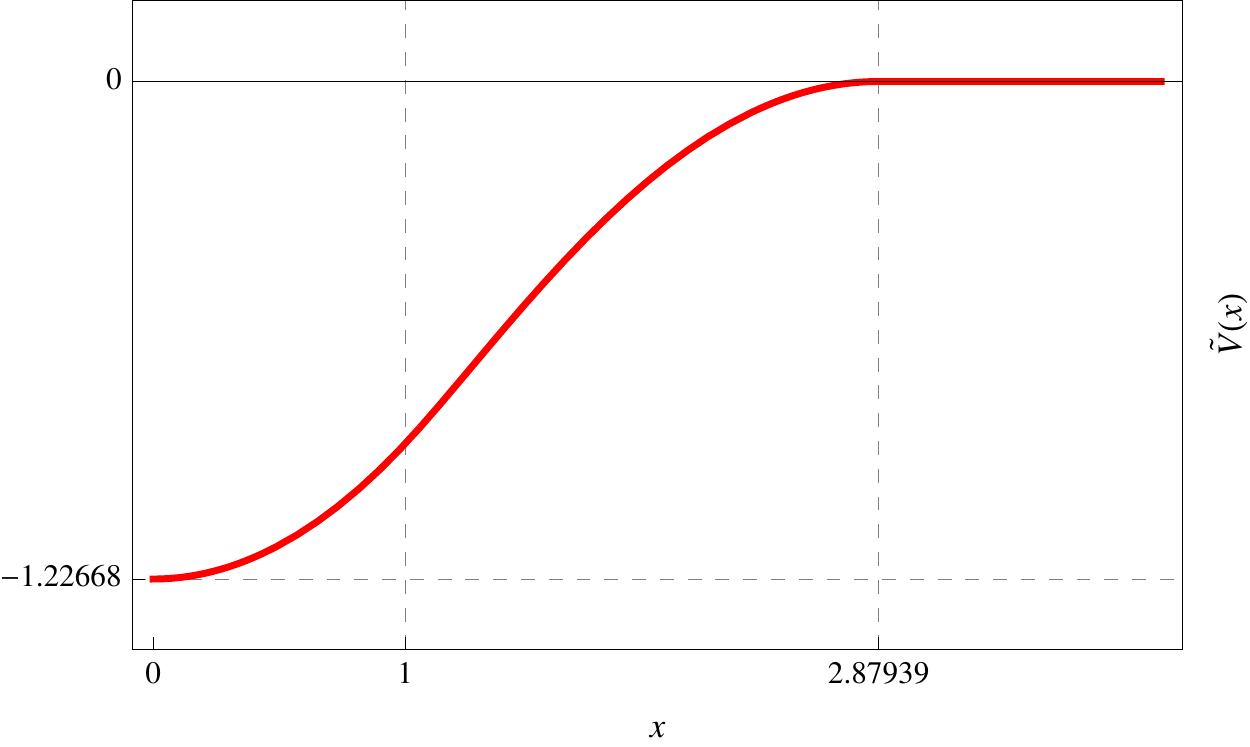}
\caption{The value function \eqref{eqsec5:BesValFunca}, for a transient Bessel process of dimension $\delta=3$, with target level $z=1$. The optimal stopping boundary is $r_*\approx 2.87939$.}
\label{figsec5:Bes3ValFunc}
\end{figure}

A notable feature of Example~\ref{exsec5:Bes} is the linearity of the optimal stopping boundary $r_*$ for transient Bessel processes with respect to the target level $z$, as evident from \eqref{eqsec5:BesOSB}. In fact, inspection of \eqref{eqsec5:OSBa}--\eqref{eqsec5:OSBb} reveals that linearity of $r_*$ with respect to to $z$ is a general feature of diffusions conforming to \eqref{eqsec5:ScaleSpeed2}. In addition, \eqref{eqsec5:BesOSB} also reveals that $\partial r_*/\partial\delta<0$, from which it follows that the optimal stopping boundaries are lower for transient Bessel processes with higher dimensions. The underlying intuition is clear, since Bessel processes with higher dimensions exhibit greater (positive) drifts, which in turn decreases their last-passage times to the target level. Consequently, the first-passage times that best approximate the last-passage times to the target level correspond to lower levels, for transient Bessel processes of higher dimensions.

Transient squared Bessel processes represent another prominent class of time-homogeneous diffusions whose scale functions and speed measures conform to \eqref{eqsec5:ScaleSpeed2}:

\begin{example}
\label{exsec5:SqBes}
Let $X$ be a squared Bessel process of dimension $\delta>2$, in which case
\begin{equation*}
b(x)=\delta\qquad\text{and}\qquad a(x)=2\sqrt{x},
\end{equation*}
for all $x>0$. According to \eqref{eqsec1:ScaleSpeed1}, its scale function and speed measure are given by
\begin{equation*}
\scale(x)=-x^{-\frac{\delta-2}{2}}\qquad\text{and}\qquad\speed(\d x)=\frac{1}{\delta-2}x^{\frac{\delta-2}{2}}\,\d x,
\end{equation*}
for all $x>0$ \citep[see e.g.][Section~XI.1]{RY99}. From \eqref{eqsec5:OSBa} it follows that the equation for the optimal stopping boundary $r_*$ is
\begin{equation}
\label{eqsec5:SqBesOSB}
\biggl(\frac{r_*}{z}\biggr)^\frac{\delta}{2}-\delta\frac{r_*}{z}+\delta=2.
\end{equation}
Using \eqref{eqsec5:ValFunc2a}, we see that the value function is given by
\begin{subequations}
\label{eqsec5:SqBesValFunc}
\begin{equation}
\label{eqsec5:SqBesValFunca}
\widetilde{V}(x)=
\begin{dcases}
\begin{split}
&z\Biggl(\frac{1}{\delta-2}\frac{r_*}{z}-\frac{4}{(\delta-2)(4-\delta)}\biggl(\frac{r_*}{z}\biggr)^\frac{4-\delta}{2}+\frac{1}{\delta}\frac{x}{z}+\frac{2}{4-\delta}\Biggr),
\end{split}
&\text{if $x\leq z$};\\
\begin{split}
&z\Biggl(\frac{1}{\delta-2}\frac{r_*}{z}-\frac{4}{(\delta-2)(4-\delta)}\biggl(\frac{r_*}{z}\biggr)^\frac{4-\delta}{2}-\frac{1}{\delta}\frac{x}{z}+\frac{2}{4-\delta}\biggl(\frac{x}{z}\biggr)^\frac{4-\delta}{2}\\
&\quad+\frac{2}{\delta}\biggl(\frac{x}{z}\biggr)^\frac{2-\delta}{2}\Biggr),
\end{split}
&\text{if $z\leq x\leq r_*$};\\
0,&\text{if $x\geq r_*$},
\end{dcases}
\end{equation}
for all $x>0$, when $\delta\neq 4$, while \eqref{eqsec5:ValFunc2c} yields
\begin{equation}
\label{eqsec5:SqBesValFuncb}
\widetilde{V}(x)=
\begin{dcases}
z\Biggl(\frac{1}{2}\frac{r_*}{z}+\frac{1}{4}\frac{x}{z}-1-\ln\frac{r_*}{z}\Biggr),&\text{if $x\leq z$};\\
z\Biggl(\frac{1}{2}\frac{r_*}{z}-\frac{1}{4}\frac{x}{z}-1+\frac{1}{2}\biggl(\frac{x}{z}\biggr)^{-1}+\ln\frac{x}{z}-\ln\frac{r_*}{z}\Biggr),&\text{if $z\leq x\leq r_*$};\\
0,&\text{if $x\geq r_*$},
\end{dcases}
\end{equation}
\end{subequations}
for all $x>0$, when $\delta=4$. For the concrete example of a squared Bessel process of dimension four, \eqref{eqsec5:SqBesOSB} possesses two solutions, one of which satisfies the admissibility criterion $r_*>z$, namely
\begin{equation*}
r_*=\bigl(2+\sqrt{2}\bigr)z\approx 3.41421 z.
\end{equation*}
Figure~\ref{figsec5:SqBes4ValFunc} plots the associated value function \eqref{eqsec5:SqBesValFuncb}, for the case when $z=1$.
\end{example}

\begin{figure}[h]
\centering
\includegraphics[scale=1]{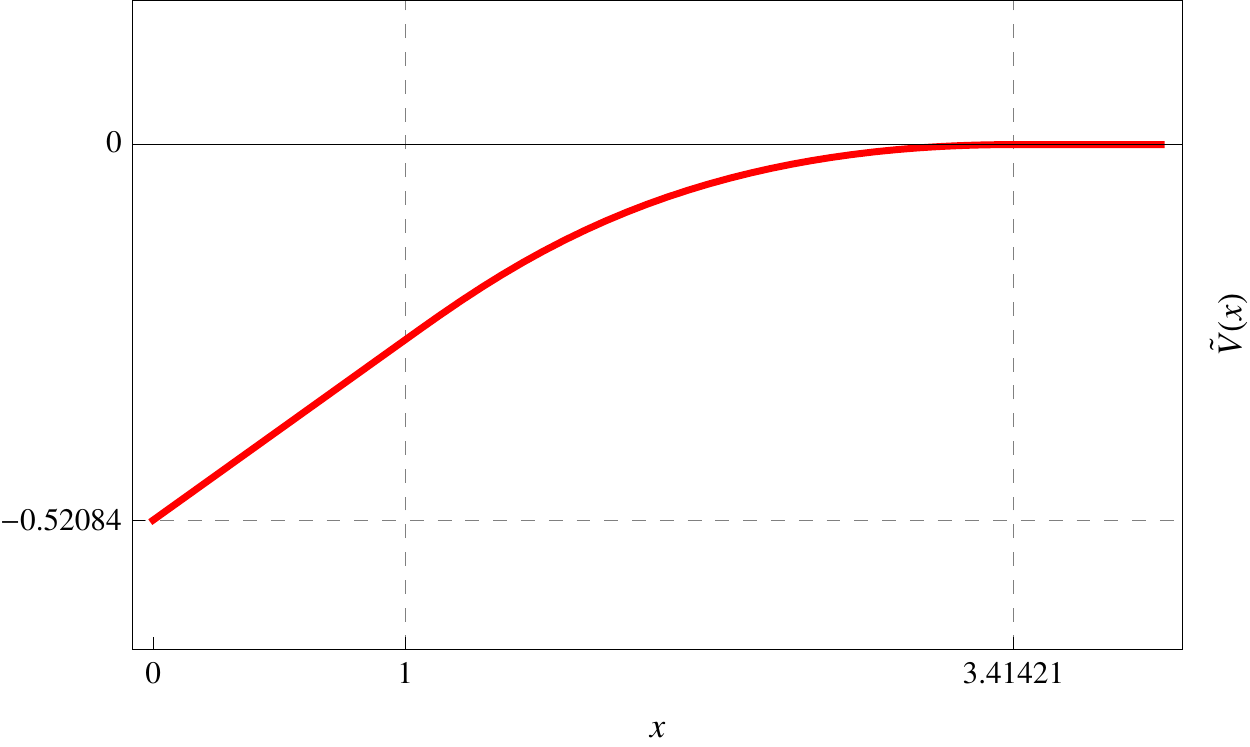}
\caption{The value function \eqref{eqsec5:SqBesValFuncb}, for a transient squared Bessel process of dimension $\delta=4$, with target level $z=1$. The optimal stopping boundary is $r_*\approx 3.41421$.}
\label{figsec5:SqBes4ValFunc}
\end{figure}

Note that the origin is an entrance boundary for the processes considered in Examples~\ref{exsec5:Bes} and \ref{exsec5:SqBes} \citep[see e.g.][Appendix~1.21 and 1.23]{BS02}, which explains why $\widetilde{V}(0+)>-\infty$ in both of those cases (see Proposition~\ref{propsec4:FreeBoundProb}). By contrast, a transient geometric Brownian motion provides an example of a time-homogeneous diffusion whose scale function and speed measure admit the representation \eqref{eqsec5:ScaleSpeed2}, and for which the origin is a non-attractive natural boundary \citep[see e.g.][Appendix~1.20]{BS02}:

\begin{example}
\label{exsec5:GBM}
Let $X$ be a geometric Brownian motion, in which case
\begin{equation*}
b(x)=\lambda x\qquad\text{and}\qquad a(x)=\sigma x,
\end{equation*}
for all $x>0$, where $\sigma>0$ and $\lambda\in\R$. We impose the condition
\begin{equation*}
\kappa\coloneqq\frac{\lambda}{\sigma^2}-\frac{1}{2}>0,
\end{equation*}
which ensures that the process is transient \citep[see e.g.][Appendix~1.20]{BS02}. According to \eqref{eqsec1:ScaleSpeed1}, its scale function and speed measure are given by
\begin{equation*}
\scale(x)=-\frac{1}{2\kappa}x^{-2\kappa}\qquad\text{and}\qquad\speed(\d x)=\frac{1}{\sigma^2}x^{2\kappa-1}\,\d x,
\end{equation*}
for all $x>0$ \citep[see e.g.][Appendix~1.20]{BS02}. From \eqref{eqsec5:OSBb} it follows that the equation for the optimal stopping boundary is
\begin{equation}
\label{eqsec5:GBMOSB}
\biggl(\frac{r_*}{z}\biggr)^{2\kappa}-4\kappa\ln\frac{r_*}{z}-2=0.
\end{equation}
Using \eqref{eqsec5:ValFunc2b}, we see that the value function is given by
\begin{equation}
\label{eqsec5:GBMValFunc}
\widetilde{V}(x)=
\begin{dcases}
\frac{1}{2\kappa\sigma^2}\Biggl(\frac{1}{\kappa}\biggl(\frac{r_*}{z}\biggr)^{-2\kappa}+\ln\frac{r_*}{z}+\ln\frac{x}{z}-\frac{3}{2\kappa}\Biggr),
&\text{if $x\leq z$};\\
\begin{split}
&\frac{1}{2\kappa\sigma^2}\Biggl(\frac{1}{\kappa}\biggl(\frac{r_*}{z}\biggr)^{-2\kappa}-\frac{2}{\kappa}\biggl(\frac{x}{z}\biggr)^{-2\kappa}-2\biggl(\frac{x}{z}\biggr)^{-2\kappa}\ln\frac{x}{z}+\ln\frac{r_*}{z}\\
&\quad-\ln\frac{x}{z}+\frac{1}{2\kappa}\Biggr),
\end{split}
&\text{if $z\leq x\leq r_*$};\\
0,&\text{if $x\geq r_*$},
\end{dcases}
\end{equation}
for all $x>0$. For the concrete example of a geometric Brownian motion with $\lambda=1$ and $\sigma=1$, the transcendental equation \eqref{eqsec5:GBMOSB} possesses two solutions, one of which satisfies the admissibility condition $r_*>z$, namely $r_*\approx 5.35370 z$. Figure~\ref{figsec5:GBMValFunc} plots the associated value function \eqref{eqsec5:GBMValFunc}, for the case when $z=1$.
\end{example}

\begin{figure}[h]
\centering
\includegraphics[scale=1]{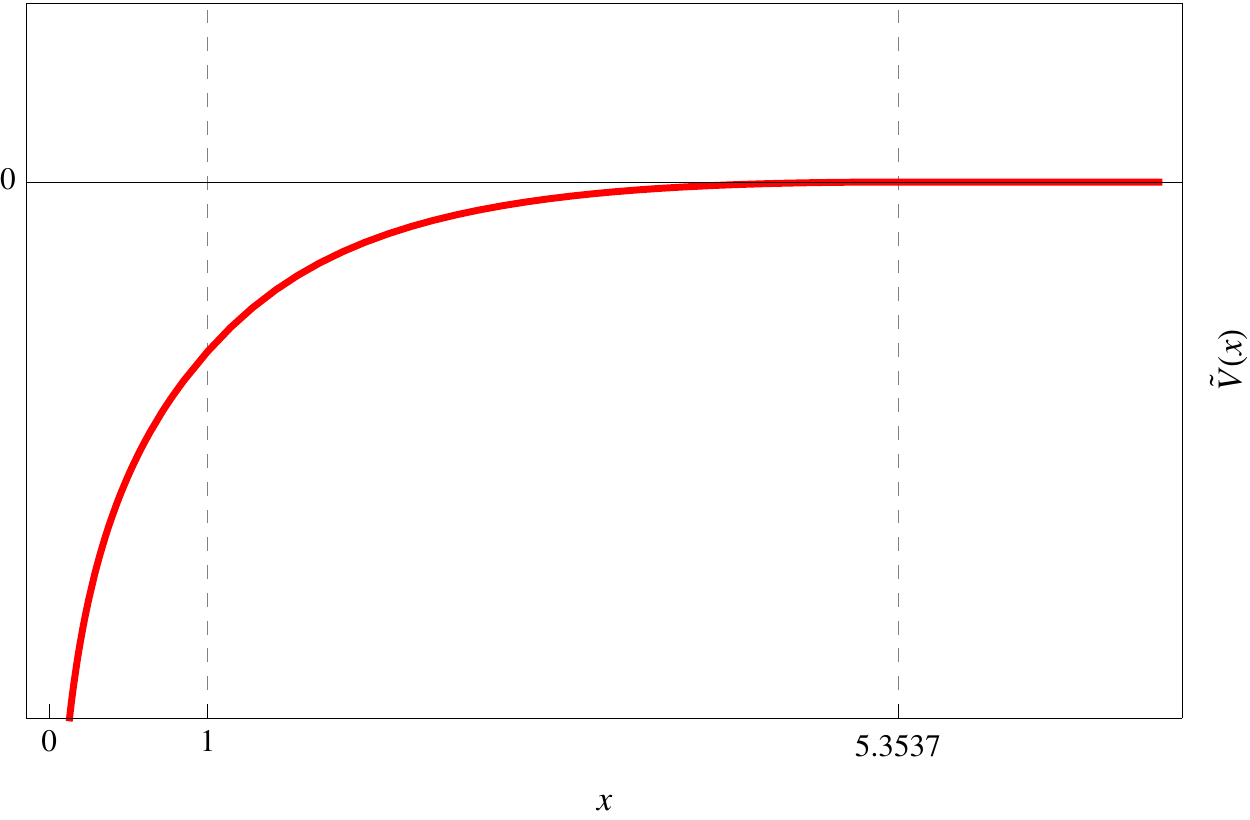}
\caption{The value function \eqref{eqsec5:GBMValFunc}, for a transient geometric Brownian motion with $\lambda=1$, $\sigma=1$, and target level $z=1$. The optimal stopping boundary is $r_*\approx 5.35370$.}
\label{figsec5:GBMValFunc}
\end{figure}

Inspection of \eqref{eqsec5:GBMOSB} reveals that $\partial r_*/\partial\kappa<0$, for the transient geometric Brownian motion considered in Example~\ref{exsec5:GBM}. In other words, a higher drift rate $\lambda$ and/or a lower volatility $\sigma$ correspond to a lower optimal stopping boundary. The intuition is obvious, once again, since a higher drift rate and/or a lower volatility decrease the last-passage time of the process to the target level. Also note that $\widetilde{V}(0+)=-\infty$, since the origin is a non-attractive natural boundary for transient geometric Brownian motions (see Proposition~\ref{propsec4:FreeBoundProb}).

The processes in Examples~\ref{exsec5:Bes}--\ref{exsec5:GBM} are all non-explosive, in the sense that $\P_x(\zeta<\infty)=0$, for all $x>0$. Next, we present an example of a transient diffusion with explosive sample paths, motivated by a process studied by \citet{KR13}. In this example, however, the scale function and speed measure are not of the form \eqref{eqsec5:ScaleSpeed2}:

\begin{example}
\label{exsec5:ExplProc}
Let $X$ be determined by
\begin{equation*}
b(x)=\lambda\kappa x^p+\frac{1}{2}\kappa^2px^{2p-1}\qquad\text{and}\qquad a(x)=\kappa x^p,
\end{equation*}
for all $x>0$, where $\lambda>0$, $\kappa>0$ and $p>1$. According to \eqref{eqsec1:ScaleSpeed1} its scale function and speed measure are given by
\begin{equation*}
\scale(x)=1-\e^{-\frac{2\lambda}{\kappa(1-p)}x^{1-p}}
\qquad\text{and}\qquad
\speed(\d x)=\frac{1}{\lambda\kappa}x^{-p}\e^{\frac{2\lambda}{\kappa(1-p)}x^{1-p}}\,\d x,
\end{equation*}
for all $x>0$. While the scale function and speed measure specified above do not conform to \eqref{eqsec5:ScaleSpeed2}, the conditions in Assumption~\eqref{eqsec1:Assump} are nevertheless easily verified. Moreover, we observe that
\begin{equation*}
\int_1^\infty\bigl(\scale(\infty-)-\scale(y)\bigr)\,\speed(\d y)
=\frac{1}{\lambda\kappa}\int_1^\infty y^{-p}\biggl(1-\e^{\frac{2\lambda}{\kappa(1-p)}y^{1-p}}\biggr)\,\d y
<-\frac{1}{\lambda\kappa}\frac{1}{1-p}<\infty,
\end{equation*}
which indicates that the boundary at infinity is attainable \citep[see e.g.][Definition~6.2]{KT81}. In other words, $\P_x(\zeta<\infty)>0$, for all $x>0$. Although the optimal stopping boundary $r_*$ and the value function $\widetilde{V}$ cannot be determined analytically in this example, we are nevertheless able to evaluate them numerically from \eqref{eqsec3:ValFunc1} and \eqref{eqsec3:OptStopBnd}. In the case when $z=1$, numerical solution of \eqref{eqsec3:OptStopBnd} gives $r_*\approx 3.77882$, for the parameter $\lambda=1$, $\kappa=1$, and $p=2$. Figure~\ref{figsec5:ExplValFunc} employs numerical integration to plot the associated value function \eqref{eqsec3:ValFunc1}.
\end{example}

\begin{figure}[h]
\centering
\includegraphics[scale=1]{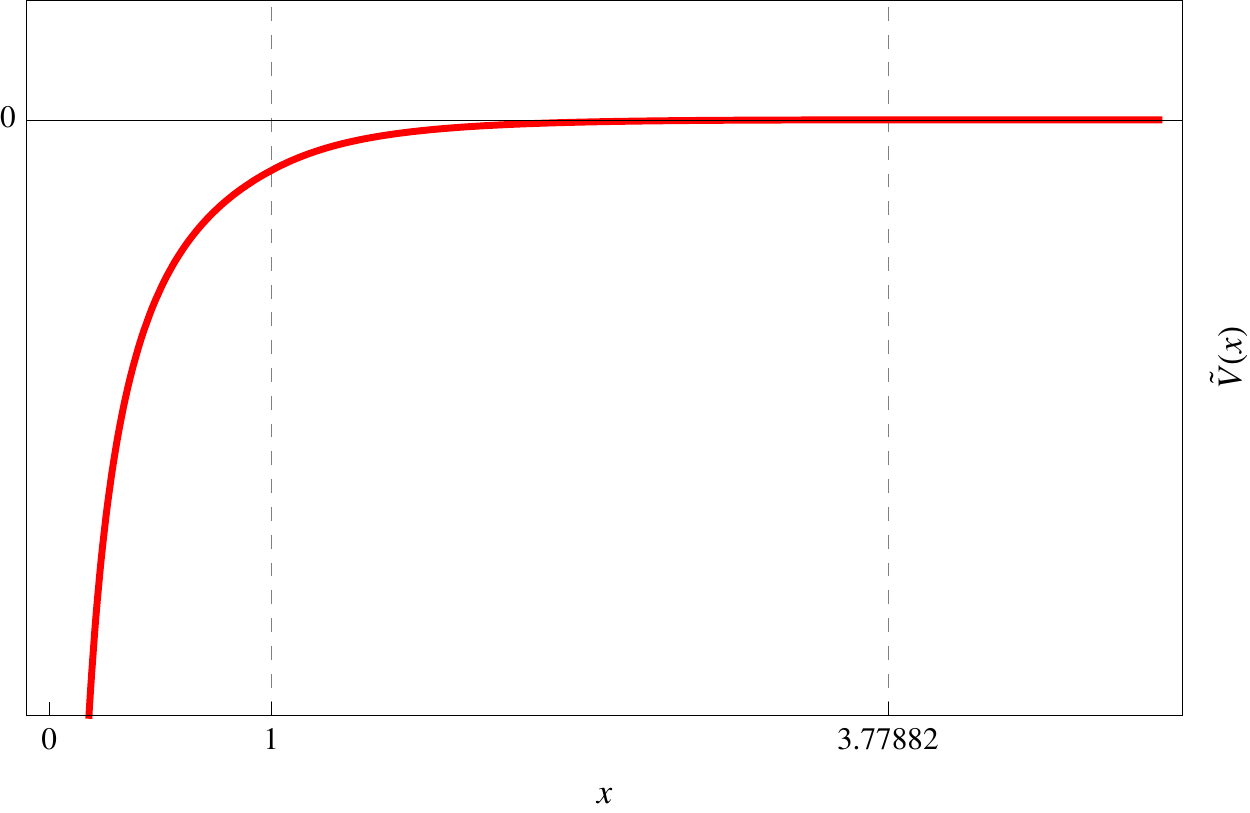}
\caption{The value function for the explosive process in Example~\ref{exsec5:ExplProc} with $\lambda=1$, $\kappa=1$, $p=2$, and target level $z=1$. The optimal stopping boundary is $r_*\approx 3.7782$.}
\label{figsec5:ExplValFunc}
\end{figure}

\bibliography{ProbFinBiblio}
\bibliographystyle{chicago}
\end{document}